\newcommand{\domain}{D}
\newcommand{\fwd}[1][]{
    \ifthenelse{\isempty{#1}}
    {G}
    {G_{#1}}
}
\newcommand{\predpts}{\bm{X}}
\newcommand{\predptsprime}{\bm{X}'}
\newcommand{\datdim}{p}
\newcommand{\somestage}{i}
\newcommand{\stage}{n}
\newcommand{\currmean}[1][]{
   \ifthenelse{\isempty{#1}}
   {\mu_{\predpts}^{(\stage)}}
   {\mu_{#1}^{(\stage)}}
}
\newcommand{\prevmean}[1][]{
   \ifthenelse{\isempty{#1}}
   {\mu_{\predpts}^{(\stage-1)}}
   {\mu_{#1}^{(\stage-1)}}
}
\newcommand{\currcov}[1][]{
   \ifthenelse{\isempty{#1}}
   {K_{\predpts\predptsprime}^{(\stage)}}
   {K_{#1}^{(\stage)}}
}
\newcommand{\prevcov}[1][]{
   \ifthenelse{\isempty{#1}}
   {K_{\predpts\predptsprime}^{(\stage-1)}}
   {K_{#1}^{(\stage-1)}}
}
\newcommand{\gp}[1][]{
    \ifthenelse{\isempty{#1}}
    {Z}
    {Z_{#1}}
}
\newcommand{\covFunN}[1][]{
    \ifthenelse{\isempty{#1}}
    {p_{\Gamma}^{(\somestage)}}
    {p_{\Gamma}^{(\somestage)}({#1})}
}
\setlist[enumerate]{leftmargin=.5in}
\setlist[itemize]{leftmargin=.5in}
\theoremstyle{definition}
\newtheorem{definition}{Definition}
\newtheorem{lemma}{Lemma}
\newtheorem{theorem}{Theorem}
\newtheorem{corollary}{Corollary}
\newtheorem{remark}{Remark}
\newtheorem*{example*}{Example}
\crefname{hypothesis}{Hypothesis}{Hypotheses}
\newcommand\pcref[1]{(\Cref{#1})}
\numberwithin{equation}{section}
\theoremstyle{plain}
\begin{document}

\begin{frontmatter}
\title{Disintegration of Gaussian Measures for Sequential Assimilation of Linear Operator Data}
\runtitle{Disintegration of Gaussian Measures}

\begin{aug}
\author{\fnms{C\'edric} \snm{Travelletti}\thanksref{t1}\ead[label=e1]{cedric.travelletti@stat.unibe.ch}}
\and
\author{\fnms{David} \snm{Ginsbourger}\thanksref{t1}\ead[label=e2]{david.ginsbourger@stat.unibe.ch}}

\address{
  Institute of Mathematical Statistics and
    Actuarial Science\\
  University of Bern\\
  Bern, Switzerland\\
\printead{e1,e2}}

\thankstext{t1}{The authors gratefully acknowledge funding from the Swiss National Science Fundation (SNF) through project no. 178858.}
\runauthor{C. Travelletti et al.}

\end{aug}

\begin{abstract}

Gaussian processes appear as building blocks in various stochastic models 
and have been found instrumental to account for imprecisely known, latent functions. 
It is often the case that such functions may be directly or indirectly evaluated, 
be it in static or in sequential settings. 
Here we focus on situations where, rather than pointwise evaluations, 
evaluations of prescribed linear operators at the function of interest 
are (sequentially) assimilated.  
While working with operator data is increasingly encountered in the practice 
of Gaussian process modelling, mathematical details of conditioning and model 
updating in such settings are typically by-passed. 
Here we address these questions by highlighting conditions under which 
Gaussian process modelling coincides with endowing separable Banach spaces of 
functions with Gaussian measures, and by leveraging existing results on the 
disintegration of such measures with respect to operator data. 
Using recent results on path properties of GPs and their connection to RKHS, 
we extend the Gaussian process - Gaussian measure correspondence beyond 
the standard setting of Gaussian random elements in the Banach space of continuous 
functions. Turning then to the sequential settings, 
we revisit update formulae in the Gaussian measure framework and establish 
equalities between final and intermediate posterior mean functions and 
covariance operators.
The latter equalities appear as infinite-dimensional and discretization-independent 
analogues of Gaussian vector update formulae.          	
\end{abstract}

\begin{keyword}[class=MSC]
\kwd[Primary ]{60G15}
\kwd{93E35}
\kwd[; secondary ]{28C20}
\end{keyword}

\begin{keyword}
\kwd{Gaussian Process, Gaussian Measure, Sequential Data Assimilation}
\end{keyword}
\tableofcontents
\end{frontmatter}

\section{Introduction}\label{sec:introduction}
Gaussian process (GP) stochastic models have found broad use in a variety 
of domains such as filtering, geostatistics, 
or analysis of computer codes. They are also frequently used 
in machine learning as priors on functions for tasks where one tries 
to learn a unknown function $f$ in a Bayesian way. Such machine learning uses 
include, among others, 
Bayesian inversion \citep{jackson_inverse} and Bayesian optimization 
\citep{kushner,mockus_bo,jones}.

One reason for the success of GPs in these domains is their closure under pointwise 
observations. Indeed, given a GP $\gp=(\gp_s)_{s\in \domain}$ on some domain $\domain$ and a 
set of points $s_1, ..., s_n \in \domain$, the distribution of $\gp$ conditionally 
on $\gp_{s_1}, ..., \gp_{s_n}$ is again Gaussian, 
with mean and covariance functions that can be computed in closed form, see e.g. \citep{rasmussen_williams}.

While traditional machine learning tended to focus only on data in the form 
of pointwise evaluations, other types of indirect, functional data become increasingly available, such as tomographic data, or derivative data \citep{solak2003derivative,ribau2018} that do not boil down to simple pointwise evaluations of the original latent function. This has sparked interest in extending GPs to different types of observations, such as integral 
observations \citep{hendriks2018evaluating,jidling2019deep} or 
linear constraints \citep{jidling2017linearly,agrell_jmlr_2019}. Broadly speaking, these methods 
aim at learning $f$ from linear form data $\ell_i(f)$, where $\ell_i:X\rightarrow \mathbb{R}$ ($i=1,2,\dots$) are linear functionals on some Banach space $X$ of functions on $D$. Just like under pointwise observations, working out conditional distributions boils down to applying conditioning formulae to finite-dimensional vectors, in that case to vectors of the form $(Z_{s}, Z_{s'}, \ell_1(Z),\dots,\ell_n(Z))$ ($s,s'\in D$).

Compared to the basic case of pointwise observations, however, ensuring that the usual way of deriving conditional distributions does actually work under linear form data requires a bit of care. The usual approach in practice is to silently assume that the considered functionals of $\gp$ can be expressed as limits of linear combinations of pointwise field evaluations, so that everything will work as intended. 
In several cases, this condition might not be straightforward to verify, and things can get even worse when one considers observations described by linear operators between Banach spaces $G:X \rightarrow Y$, thus raising the question of what kind of operator data can be assimilated, or more precisely, of 
which properties an operator $G$ needs to satisfy in order for 
the conditional law to be well-defined. While this question can be tricky to 
answer using the traditional Gaussian process framework, 
modern probability theory in Banach spaces offers a rigorous, generic approach to conditioning under linear operator using the language of disintegrations 
of measures, as we will clarify next.

Beyond establishing solid mathematical foundations for conditioning on linear operator data, another problem that has received much attention lately in the GP litterature is that of efficiently performing sequential 
data assimilation \citep{opt_des_inverse,huber2014recursive,solin_2015}. 
In such a framework, new data become available sequentially and predictions have to be recomputed along the way to incorporate the new information. 
To alleviate the computational burden associated to sequential learning, 
various \textit{updating} scheme have been developed 
\citep{update_chevalier,update_emery,update_gao,update_barnes} which aim 
at expressing the contribution of the new data as an update to the current 
posterior.

In the present work, we focus on the intersection of the two aforementioned topics, that is, we concentrate on sequential assimilation of linear operator data. Our aim is to provide an abstract mathematical foundation for the above setting by formulating it in the language of disintegrations and to derive update formulae for disintegrations. 
In passing, we clarify the link between the traditional Gaussian process framework and the Gaussian measure language. This work emerged as a theoretical foundation for practical approaches 
to large-scale assimilation of linear operator data under GP priors developed in \citep{travelletti2021}.

The article is structured as follows: in \Cref{sec:process_measure_equivalence}, 
we review results from \citet{rajput_gp_vs_measures} in order to 
prove equivalence of the Gaussian 
process and Gaussian measure approaches in various cases. 
We also connect this 
with recent results on sample path properties of GP \citep{steinwart_mercer_supplementary} 
to characterize situations 
under which GPs induce a Gaussian measure on some suitable space of functions.

Then, in \Cref{sec:disintegration}, we turn to disintegrations of Gaussian measures 
\citep{TARIELADZE2007851}, which we extend to the non-centred and sequential case, thereby 
providing an extension of the usual kriging update formulae \citep{update_chevalier} 
to disintegrations. 

Those results offer prospects for theoretical inquiries in Bayesian optimization 
\citep{bect_2019} as well more applied uses, such as the formulation of discretization-independent 
algorithms in Bayesian inversion \citep{stuart_cotter}. We also hope that our characterization 
of the Gaussian process - Gaussian measure equivalence will help bring benefits of the abstract language 
of disintegrations to the applied GP community.


\begin{example*}
For the rest of this work, we will consider the task 
of learning an unknown function $f$ living in a separable Banach space $X$ 
from data of the form $y_i = G_i(f),~i=1,\dotsc, n$, where 
\begin{align*}
    G_i: X \rightarrow Y,
\end{align*}
are bounded liner operators into a separable Banach space $Y$, we will call 
the $G_i$ the \textit{observation operators}. As a simple example of a problem falling into this 
setting, consider the task of learning a continuous function defined on the interval $\left[-1, 1\right]$ via different types of data: pointwise function values, integrals of the function, Fourier coefficients, etc. \Cref{fig:fourier_variance_reduction} provides an illustration of solutions obtained under a Gaussian process prior. Note that the three different combinations of observations in \Cref{fig:fourier_variance_reduction} can each be described by a linear operator $G:C\left(\left[-1, 1\right]\right)\rightarrow\mathbb{R}^{\datdim}$
\begin{figure}[h!]
\centering
\subfloat[prior]{\includegraphics[scale=0.38]{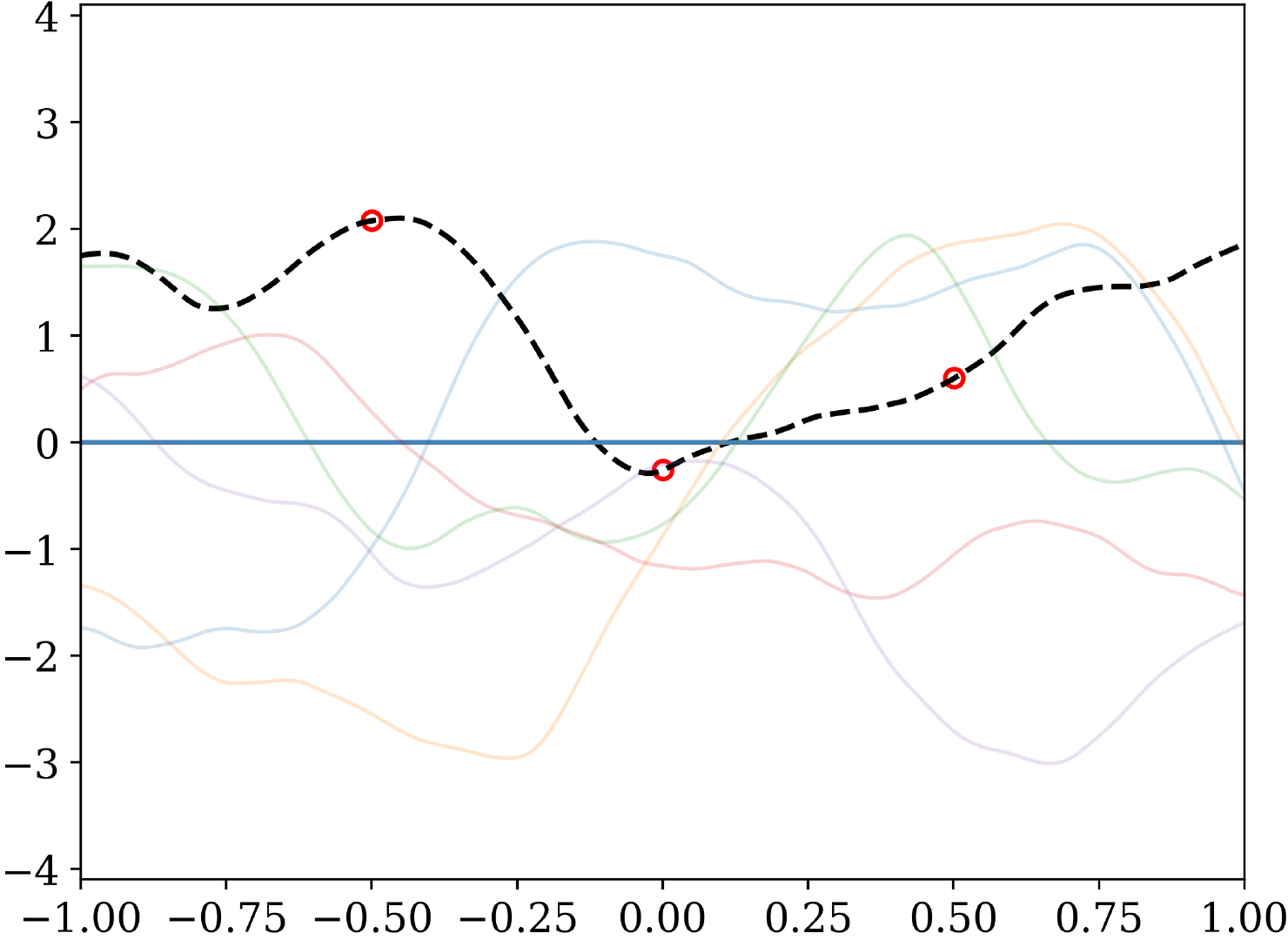}}
\subfloat[pointwise]{\includegraphics[scale=0.38]{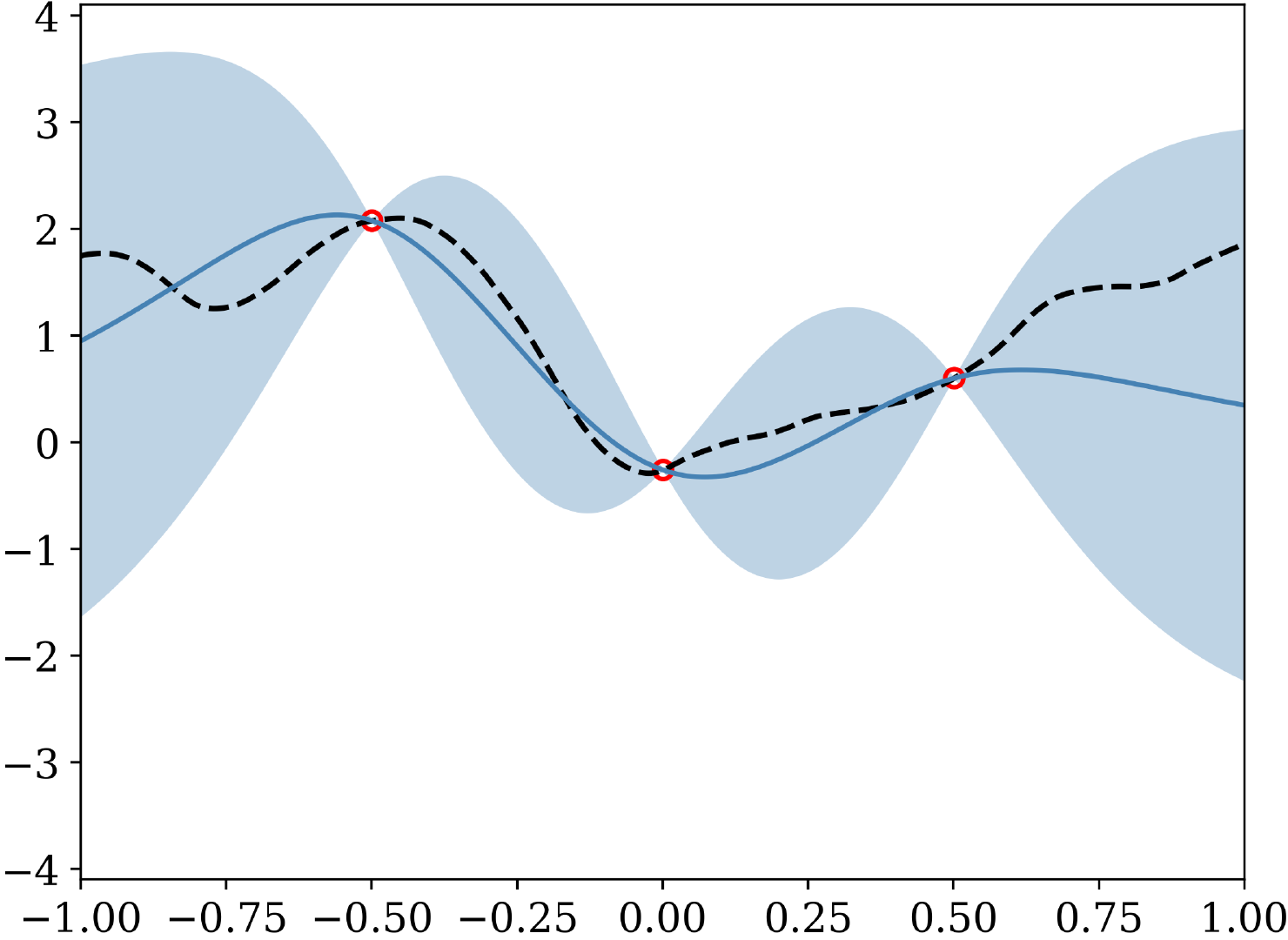}}\\
\subfloat[pointwise + integral]{\includegraphics[scale=0.38]{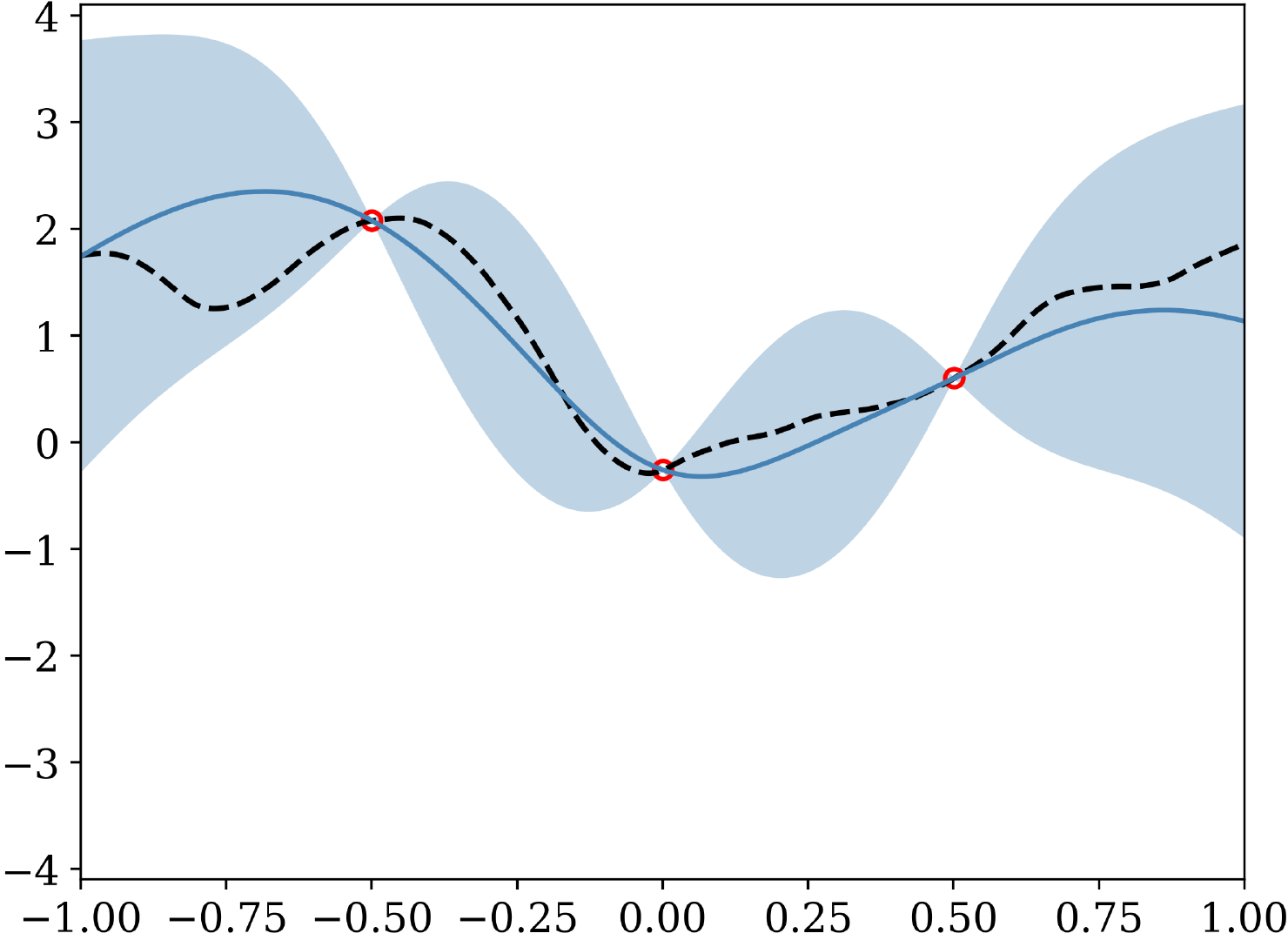}}
\subfloat[pointwise + integral + Fourier]{\includegraphics[scale=0.38]{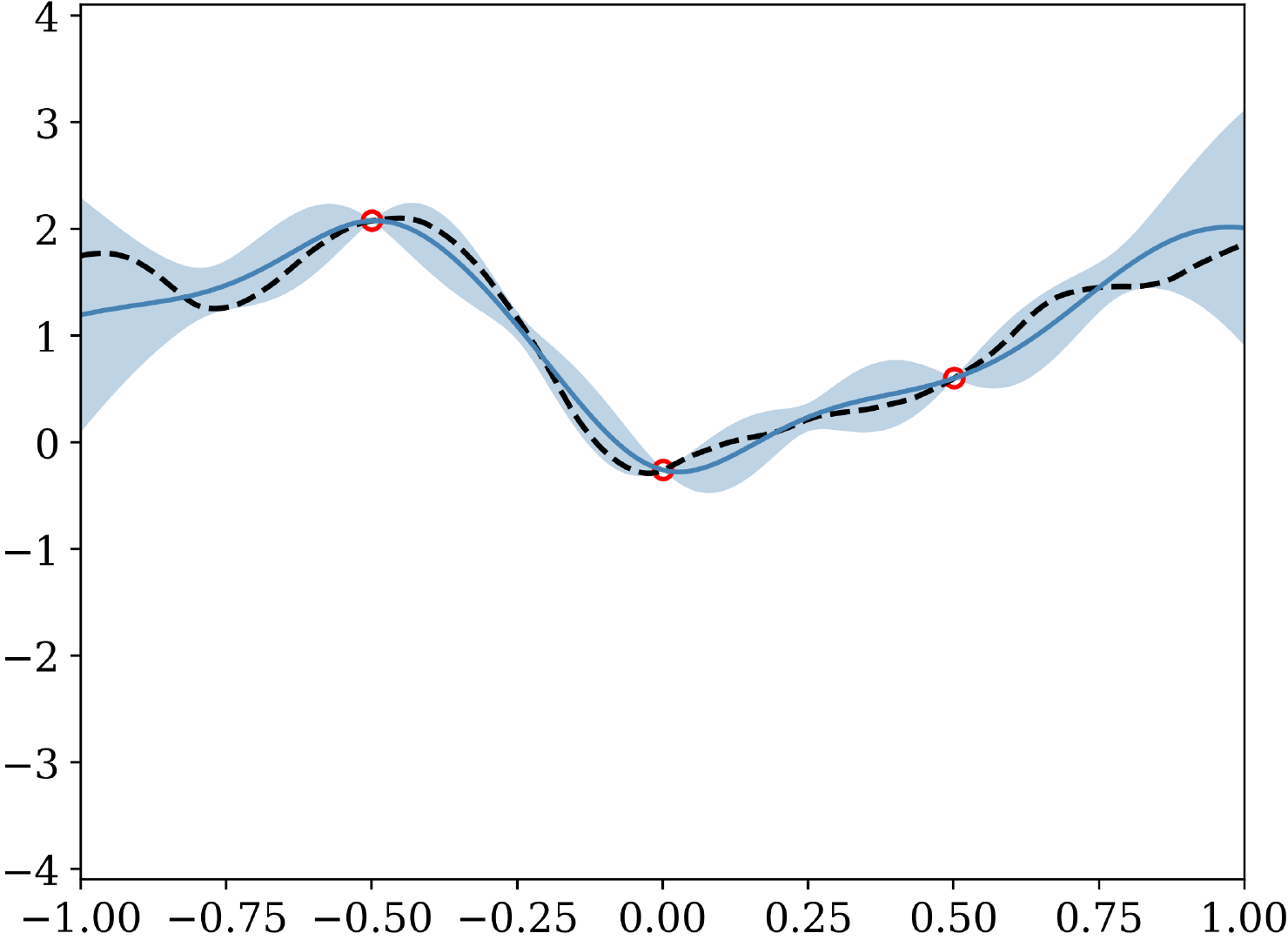}}
\caption{Conditional mean (blue) and 2$\sigma$ credible intervals after inclusion of different types of data: (a) realizations independently sampled from the prior GP, (b) prediction based on pointwise data at 3 locations, (c) prediction based on pointwise data + integral over domain, (d) prediction based on pointwise data + integral over domain + first two Fourier coefficients. The true unknown function is shown in dashed black.
}
\label{fig:fourier_variance_reduction}
\end{figure}
\end{example*}

Note that this example can already serve to illustrate the theoretical difficulties 
associated with the conditional law under linear operator observations. Consider for 
example derivative observations $y = f'(x_0),~x_0 \in D$. The usual procedure when working 
with derivatives of GPs is to assume mean square differentiability of the process. 
But even then, results on the link between mean 
square differentiability of the process and almost sure differentiability of the paths 
\citep{cambanis_differentiability,SCHEUERER20101879} require additional assumptions 
to ensure path differentiability, so that the observation operator is not 
guaranteed to be bounded.


\section{Gaussian Processes and Gaussian Measure: Background and Equivalence}\label{sec:process_measure_equivalence}
When working with Gaussian priors over spaces of function defined over an 
arbitrary domain $D$, two complementary
approaches are often used:

\begin{itemize}
    \item One can work with a  Gaussian process on $D$, which is defined as a
        stochastic process  $Z=(Z_s)_{s \in D}$ indexed by $D$, such that for 
        any number of points  $s_1, ..., s_n \in D$, the distribution of
         $\left(Z_{s_1}, ..., Z_{s_n}\right)$  is Gaussian, e.g., \citep{tarantola,Sarkka_gp_operator}.
    \item One can work with a
         Gaussian measure  which is defined as a Borel measure on  $C(D)$ 
        such that for any continuous linear functional  $\ell \in C(D)^*$,
        the measure  $\ell_{\#}\mu=\mu \circ \ell^{-1}$ on $\mathbb{R}$  is Gaussian, e.g., \citep{stuart_2010,stuart_dashti,Sullivan2015,Ernst2014}.
\end{itemize}
~\\
For a Gaussian process $Z$ as introduced above, its mean and covariance function are defined as
\begin{align*}
      &m: s \in D \mapsto m(s) = \mathbb{E}\left[Z_x\right]\\
      &k: (s,t)\in D^2 \mapsto  k(s, t) = \mathbb{E}\left[Z_s Z_t\right] - \mathbb{E}\left[Z_s\right]\mathbb{E}\left[Z_t\right],
\end{align*}
where the existence of moments is guaranteed by the joint Gaussianity of $(Z_s,Z_t)$ for any $(s,t)\in D^2$. Note that here we will often used the alternative notation $m_s$ for $m(s)$ as it will increase the readability of forthcoming equations. 

When working with a Gaussian measure $\mu$ over a separable Banach space $X$, the notions of mean and covariance functions are respectively replaced by the \textit{mean element} and \textit{covariance operator}. Here we denote by $X^*$ the (continuous) dual space of $X$, and for any element $f\in X$ and continuous linear form $g^*$ we use the duality notation $\langle f,g^* \rangle=g^*(f)$. 

\begin{definition}
Given a Gaussian measure $\mu$ on a Banach space $X$, the mean of $\mu$ is the unique element $m_{\mu}\in X$ such that:
\begin{equation}\label{eq:defmeanelement}
    \int_X \langle f, g^*\rangle d\mu(f) = \langle m_{\mu}, g^*\rangle,~\forall g^*\in X^*.
\end{equation}
The covariance operator of $\mu$ is the linear operator $C_{\mu}:X^*\rightarrow X$ defined by
\begin{equation}\label{eq:defcovarianceoperator}
    \langle C_{\mu}g_1^*, g_2^*\rangle = \int_X 
    \left(
        \langle f, g_1^* \rangle
        - \langle m_{\mu}, g_1^*\rangle
    \right)
        \left(
        \langle f, g_2^* \rangle
        - \langle m_{\mu}, g_2^*\rangle
        \right)
            d\mu(f)
,~\forall g_1^*, g_2^*\in X^*
\end{equation}
We refer the reader to \citet{Vakhania1987} for more details.
\end{definition}


When considering Gaussian processes with continuous trajectories over a compact 
metric space $D$, the Gaussian process and Gaussian measure points of view are known to be equivalent, with $X$ being the Banach space of continuous functions $C(D)$ equipped with the sup norm.
Indeed, one can show that a Gaussian measure on $C(D)$ defines an equivalent Gaussian process on D with continuous trajectories, and vice-versa. This allows one to work with Gaussian measures and Gaussian processes interchangeably on this Banach space. 
The equivalence is ensured by the following two theorems, which are multidimensional analogues of the one presented in \citet{rajput_gp_vs_measures}.

We first show that a Gaussian process on $D$ with continuous sample paths induces a Gaussian measure on $C(D)$. Indeed, given such a Gaussian process $Z$, one may try to induce a measure $ \mu_Z := \mathbb{P}\circ \Phi^{-1}$, where $\Phi:=Z\left(\cdot;\omega\right)$. The next theorem guarantees that this indeed defines a Gaussian measure. This result is well known 
in the Gaussian measure litterature (see e.g. \cite{bogachev1998gaussian}) and we 
provide a proof in the appendix for the sake of completeness.
\begin{theorem}\label{th:gaussianity_induced_measure}
    Let $\left(\Omega,\mathcal{F},\mathbb{P};Z(\omega,s),s\in D\right)$ be a
    Gaussian process on a compact metric space $D$ with continuous sample paths.
    Then the induced measure 
    \[
        \mu_Z := \mathbb{P}\circ \Phi^{-1}
    \]
    is well-defined (as a Borel measure) and Gaussian.
\end{theorem}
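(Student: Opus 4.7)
The statement naturally splits into two tasks: showing that $\mu_Z$ is a well-defined Borel probability measure on $C(D)$, and verifying that it is Gaussian in the sense that every continuous linear pushforward is a Gaussian measure on $\mathbb{R}$.

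For well-definedness, the plan is to lean on the separability of $C(D)$, which follows from $D$ being compact metric. On a separable Banach space the Borel $\sigma$-algebra coincides with the $\sigma$-algebra generated by continuous linear functionals, and in $C(D)$ it is already generated by the countable family of point evaluations $\mathrm{ev}_s$ indexed by a dense subset of $D$. Since each composition $\mathrm{ev}_s \circ \Phi(\omega) = Z(\omega,s)$ is measurable by the very definition of a stochastic process, $\Phi$ will be measurable as a map $\Omega \to (C(D),\mathcal{B}(C(D)))$, and $\mu_Z = \mathbb{P} \circ \Phi^{-1}$ a well-defined Borel probability measure.

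For Gaussianity, I would fix an arbitrary $\ell \in C(D)^*$ and invoke the Riesz--Markov--Kakutani representation to write $\ell(f) = \int_D f\,d\nu$ for some finite signed Borel measure $\nu$ on $D$. I would then construct approximating functionals by choosing, for each $n$, a finite measurable partition $\{A_1^{(n)},\dots,A_{k_n}^{(n)}\}$ of $D$ with mesh tending to zero and representatives $s_i^{(n)} \in A_i^{(n)}$, and setting
\[
    \ell_n(f) := \sum_{i=1}^{k_n} \nu\bigl(A_i^{(n)}\bigr)\, f\bigl(s_i^{(n)}\bigr).
\]
Uniform continuity of any $f \in C(D)$ combined with the finite total variation $|\nu|(D)$ yields $\ell_n(f) \to \ell(f)$; applied pathwise, this gives $\ell_n(\Phi(\omega)) \to \ell(\Phi(\omega))$ for every $\omega$. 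Each $\ell_n(\Phi)$ is a finite linear combination of components of the Gaussian vector $(Z_{s_1^{(n)}},\dots,Z_{s_{k_n}^{(n)}})$ and is therefore Gaussian, and a classical characteristic-function argument shows that the almost-sure (hence in distribution) limit of Gaussian random variables is itself Gaussian. Consequently $\ell_\#\mu_Z$ is Gaussian on $\mathbb{R}$.

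\textbf{Main obstacle.} The step I expect to require the most care is the final Gaussian-limit argument: one has to pass from pointwise convergence of characteristic functions $\phi_{\ell_n(\Phi)}(t) = \exp(im_n t - \tfrac{1}{2}\sigma_n^2 t^2)$ to convergence of the parameters $(m_n,\sigma_n^2)$, ruling out $\sigma_n^2 \to \infty$ using that the limit is a genuine characteristic function (continuous at $0$ with value $1$). The remaining ingredients are routine: a Jordan decomposition of $\nu$ together with a standard covering argument to build the partitions, and the uniform continuity that compactness of $D$ affords for each continuous sample path.
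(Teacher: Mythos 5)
Your proposal is correct and follows essentially the same route as the paper: the paper likewise fixes $\ell \in C(D)^*$, represents it by a Radon measure via Riesz--Markov--Kakutani, approximates each continuous sample path by simple functions built from point evaluations (yielding exactly your finite Gaussian sums $\sum_k Z\bigl(t_k^{(n)};\omega\bigr)\,\lambda\bigl(A_k^{(n)}\bigr)$), and concludes Gaussianity of the limit by characteristic functions and the L\'evy continuity theorem. The only cosmetic difference is the well-definedness step, where the paper invokes a product-measurability theorem for processes with continuous paths on compact metric spaces, whereas you argue via separability of $C(D)$ and the fact that its Borel $\sigma$-algebra is generated by point evaluations; both justifications are standard and valid.
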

On the other hand, given a Gaussian measure $\mu$ on $C(D)$, the following theorem ensures 
that $\mu$ induces indeed a Gaussian process.
\begin{theorem}\label{th:gaussianity_induced_process}
    Let $\mu$ be a Gaussian measure on $C(D)$, for a compact metric space $D$. Then, letting $\Omega=C(D)$ and $\mathcal{F}$ be the Borel sigma algebra on $C(D)$,
    the collection of random variables
    \[
        Z_s:\left(\Omega,\mathcal{F}, \mu\right)\rightarrow
            \left(\mathbb{R}, \mathcal{B}\left(\mathbb{R}\right)\right),~ \omega\mapsto
            \delta_s\left(\omega\right)
    \]
    for all $s \in D$ defines a Gaussian process with paths
    in $C(D)$ which induces $\mu$ on $C(D)$.
\end{theorem}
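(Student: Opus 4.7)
The plan is to check the three defining properties of the claimed Gaussian process (measurability of each coordinate, joint Gaussianity of finite-dimensional marginals, continuity of sample paths) and then to verify that the resulting pushforward on $C(D)$ is exactly $\mu$. All of this reduces to the observation that on $(C(D),\mathcal{F},\mu)$ the random variables $Z_s$ are nothing but the evaluation functionals $\delta_s:\omega\mapsto\omega(s)$, which live naturally in $C(D)^*$.

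First I would verify that each $Z_s=\delta_s$ is a continuous linear form on $C(D)$ (indeed $|\delta_s(\omega)|\le\|\omega\|_\infty$), so in particular Borel measurable, hence a well-defined real-valued random variable under $\mu$. Joint Gaussianity of $(Z_{s_1},\ldots,Z_{s_n})$ for any finite tuple $s_1,\ldots,s_n\in D$ then follows from the Cramer-Wold device: for any $c_1,\ldots,c_n\in\mathbb{R}$ the combination $\sum_i c_i Z_{s_i}=\sum_i c_i\delta_{s_i}$ remains a continuous linear form on $C(D)$, hence an element of $C(D)^*$. By the defining property of a Gaussian measure on $C(D)$ recalled in the excerpt, its pushforward under $\mu$ is Gaussian on $\mathbb{R}$, and since this holds for every linear combination, the joint law is Gaussian.

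Continuity of sample paths is tautological with the choice $\Omega=C(D)$: for every $\omega\in\Omega$ the trajectory $s\mapsto Z_s(\omega)=\omega(s)$ is precisely $\omega$, which is continuous by assumption on $\omega\in C(D)$. To close the loop and show that $Z$ induces $\mu$ on $C(D)$, I would note that the map $\Phi:\Omega\to C(D)$, $\omega\mapsto Z(\cdot,\omega)$, is identically $\omega\mapsto\omega$, i.e.\ the identity on $C(D)$. Consequently $\mu_Z=\mu\circ\Phi^{-1}=\mu$, which is the desired conclusion.

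The main (and rather mild) obstacle is conceptual rather than technical: one has to resist the temptation to invoke finite-dimensional projections or cylinder sets and instead recognize that on the canonical probability space $(C(D),\mathcal{B}(C(D)),\mu)$ the coordinate process is automatically path-continuous and its distribution coincides with the ambient measure. Once this canonical construction is accepted, each of the above steps is essentially immediate, with Cramer-Wold providing the only nontrivial ingredient, namely the passage from one-dimensional Gaussianity of every $\ell\in C(D)^*$ to joint Gaussianity of finite collections of evaluation functionals.
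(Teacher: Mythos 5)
Your proof is correct and follows essentially the same route as the paper's: both arguments rest on the fact that the Dirac functionals $\delta_s$ and their finite linear combinations belong to $C(D)^*$, so that Gaussianity of all finite-dimensional marginals follows directly from the defining property of the Gaussian measure $\mu$. You are in fact slightly more complete than the paper, since you also spell out the path-continuity of the coordinate process and the observation that $\Phi$ is the identity map (hence $\mu_Z=\mu$), steps the paper leaves implicit.
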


Under this correspondence, the mean and covariance functions of the process may be obtained as special cases of the mean element and covariance operator of the corresponding measure by acting on them with Dirac delta functionals (which in this case belong to the continuous dual 
of the Banach space under consideration):

\begin{lemma}\label{th:corresp_mean_cov}
    Let $Z$ be a Gaussian process on a compact metric space $D$ with continuous trajectories, and let $\mu$ be the corresponding induced measure on $C(D)$. Then the covariance operator and mean element of the measure are related to the mean and covariance function of the process via
    \begin{align}
        m_s &= \mathbb{E}\left[Z_s\right] = \langle m_{\mu}, \delta_s\rangle\label{eq:corresp_mean},\\
        k(s_1, s_2) &= \mathbb{E}\left[Z_{s_1} Z_{s_2}\right] - \mathbb{E}\left[Z_{s_1}\right]\mathbb{E}\left[Z_{s_2}\right] = \langle C_{\mu}\delta_{s_2}, \delta_{s_1} \rangle\label{eq:corresp_cov},
    \end{align}
    for all $s_1, s_2 \in D$.
\end{lemma}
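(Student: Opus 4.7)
The plan is to apply the defining identities \eqref{eq:defmeanelement} and \eqref{eq:defcovarianceoperator} for the mean element and covariance operator of $\mu$, choosing the test functionals to be Dirac evaluations $\delta_s$, and then rewrite the resulting integrals over $C(D)$ as integrals over $\Omega$ via the change-of-variables formula associated with the pushforward $\mu = \mathbb{P}\circ\Phi^{-1}$ given by \Cref{th:gaussianity_induced_measure}.

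First I would record that for every $s\in D$ the Dirac evaluation $\delta_s: f\mapsto f(s)$ lies in $C(D)^{*}$, since $|\delta_s(f)| = |f(s)| \leq \|f\|_{\infty}$ for every $f\in C(D)$. This makes $\delta_s$ an admissible test functional in \eqref{eq:defmeanelement}–\eqref{eq:defcovarianceoperator}. Moreover, since $\delta_s$ is continuous and linear, its pushforward under $\mu$ is a (real-valued) Gaussian measure, so $f\mapsto f(s)$ belongs to $L^{2}(\mu)$ and all integrals below are finite.

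For the mean identity, I would insert $g^{*}=\delta_s$ into \eqref{eq:defmeanelement} and use the change-of-variables theorem for pushforward measures together with the definition $\Phi(\omega) = Z(\cdot,\omega)$:
\begin{align*}
\langle m_{\mu},\delta_s\rangle
&= \int_{C(D)} \langle f, \delta_s\rangle\, d\mu(f)
= \int_{\Omega} \langle \Phi(\omega), \delta_s\rangle\, d\mathbb{P}(\omega)
= \int_{\Omega} Z_s(\omega)\, d\mathbb{P}(\omega)
= \mathbb{E}[Z_s] = m_s,
\end{align*}
which is \eqref{eq:corresp_mean}. For the covariance identity I would apply the same recipe with $g_1^{*} = \delta_{s_1}$ and $g_2^{*} = \delta_{s_2}$ in \eqref{eq:defcovarianceoperator}, then transport the integral to $\Omega$ as above, and recognise the resulting expression as $\mathbb{E}[(Z_{s_1}-m_{s_1})(Z_{s_2}-m_{s_2})] = k(s_1,s_2)$, giving \eqref{eq:corresp_cov}.

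There is no real obstacle in this argument: the proof is essentially two applications of the pushforward change-of-variables formula plus integrability of $\delta_s$ against $\mu$. The only point that deserves care is verifying the integrability and Borel measurability needed to invoke the change-of-variables theorem, but measurability of $\omega\mapsto Z_s(\omega) = \delta_s\circ\Phi(\omega)$ follows from composition of the Borel-measurable map $\Phi:\Omega\to C(D)$ (implicit in \Cref{th:gaussianity_induced_measure}) with the continuous functional $\delta_s$, and integrability was settled above. Thus the equalities reduce to the definitions of $m_{\mu}$ and $C_{\mu}$ evaluated along Dirac functionals.
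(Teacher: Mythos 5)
Your proof is correct and takes essentially the same approach as the paper's: both insert the Dirac functionals into the defining identities \eqref{eq:defmeanelement} and \eqref{eq:defcovarianceoperator} and transfer the resulting integrals between $\mu$ on $C(D)$ and $\mathbb{P}$ on $\Omega$ via the pushforward relation, which the paper records as \Cref{eq:corresp_expectations}. The only minor difference is that you work directly with the scalar bilinear form defining $C_{\mu}$, whereas the paper evaluates $\delta_{s_1}$ against a vector-valued integral and justifies the exchange by Fubini, a step your variant avoids.
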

~\\
These considerations allow us to work interchangeably with the two points of views. 
While in many practical circumstances the GP point of view is sufficient, 
Gaussian measures can be leveraged to provide rigorous updating of GPs 
under linear operator observations, as we will show in \Cref{sec:disintegration}.

\begin{remark}
    The correspondence between Gaussian processes and measures is not limited to the Banach space $C(D)$ of continuous functions over a compact metric space. 
    Indeed \citet{rajput_gp_vs_measures} also prove correspondence for $L^p$ spaces and spaces of absolutely continuous functions. 
    However, the proofs are done on a case by case basis. 
\end{remark}



Even if the Banach space $C(D)$ of continuous functions on a compact domain provides 
a basic setting for the Gaussian process - Gaussian measure equivalence, it often 
proves insufficient when one wants to use this correspondence to tackle conditioning under 
linear operator observations. For example, the differential operator $d/dx$ is not even 
a well-defined operator on $C(D)$. 
For such operators, the natural domains to consider are Sobolev spaces. 
This shows that, in the Gaussian measure framework,
when one wants to assimilate observations that are "finer" than simple 
pointwise evaluations, one has to go beyond the Banach space $C(D)$.
This is what we will do in the following 
section by considering reproducing kernel Hilbert spaces.\\

\noindent \textbf{The Reproducing Kernel Hilbert Space Case:}
The proofs of the process-measure equivalence theorems 
\Cref{th:gaussianity_induced_measure,th:gaussianity_induced_process} 
in the Banach space of continuous functions over a compact domain 
rely on having a characterization of the dual space of the Banach space under consideration, and on being able to approximate elements of the dual via pointwise evaluations. 
Indeed, Gaussian measures on a Banach space are characterized by the Gaussianity of their linear functionals, whereas GPs are 
characterized by the Gaussianity of finite collections of field evaluations, making the link between linear functionals and pointwise evaluations a crucial one in the correspondence.

The natural class of spaces where such a link exists is that of reproducing kernel Hilbert spaces (RKHS) \citep{aronszajn,schwartz,berlinet_thomas_agnan,Kanagawa2018GaussianPA}. 
Indeed, one of the defining properties of RKHS is that their (continuous) dual contain the evaluation functionals, so that one can directly adapt the process-measure correspondence theorems. 
Note that the product measurability is still guaranteed by \Cref{th:continuous_product_meas} since RKHS of functions over a compact metric space are contained in the Banach space of 
continuous functions provided that the reproducing kernel is continuous.

\begin{theorem}\label{th:gaussianity_induced_measure_hilbert}
    Let $\left(\Omega,\mathcal{F},\mathbb{P};Z(\omega,s),s\in D\right)$ be a
    Gaussian process with trajectories 
    in a separable RKHS $\mathcal{H}$ of functions over a compact metric space $D$. 
    Then the induced measure 
    \[
        \mu_Z := \mathbb{P}\circ \Phi^{-1}
    \]
    is well-defined (as a Borel measure) and Gaussian.
\end{theorem}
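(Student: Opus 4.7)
The plan is to mirror the proof of Theorem \ref{th:gaussianity_induced_measure}, but in place of Dirac evaluations on $C(D)$ we use Riesz representation on $\mathcal{H}$ and the density of reproducing kernel sections. Two things must be checked: (i) $\Phi : \omega \mapsto Z(\omega, \cdot)$ is Borel measurable as a map $(\Omega, \mathcal{F}) \to (\mathcal{H}, \mathcal{B}(\mathcal{H}))$, so that $\mu_Z = \mathbb{P} \circ \Phi^{-1}$ is well defined; and (ii) for every $\ell \in \mathcal{H}^*$, the pushforward $\ell_{\#} \mu_Z$ is Gaussian on $\mathbb{R}$.

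For (i), since $\mathcal{H}$ is separable its Borel $\sigma$-algebra is generated by the continuous linear functionals, and by Riesz representation these are all of the form $\langle \cdot, h \rangle_{\mathcal{H}}$ with $h \in \mathcal{H}$. It therefore suffices to show that $\omega \mapsto \langle \Phi(\omega), h \rangle_{\mathcal{H}}$ is measurable for every $h$. When $h = k(\cdot, s)$ this map equals $\omega \mapsto Z(\omega, s)$, which is measurable by the process definition. For a general $h$, I would use that the linear span of $\{k(\cdot, s) : s \in D\}$ is dense in $\mathcal{H}$ (its orthogonal complement consists of functions vanishing at every $s$), pick finite linear combinations $h_n$ approaching $h$ in $\mathcal{H}$-norm, and note that, for each $\omega$, Cauchy--Schwarz yields
\[
|\langle \Phi(\omega), h_n \rangle_{\mathcal{H}} - \langle \Phi(\omega), h \rangle_{\mathcal{H}}| \leq \|\Phi(\omega)\|_{\mathcal{H}} \, \|h_n - h\|_{\mathcal{H}} \to 0,
\]
since $\Phi(\omega) \in \mathcal{H}$ by hypothesis. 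Measurability then transfers to the limit.

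For (ii), fix $\ell = \langle \cdot, h \rangle_{\mathcal{H}} \in \mathcal{H}^*$ and let $h_n$ be as above. Each $\ell_n(\omega) := \langle \Phi(\omega), h_n \rangle_{\mathcal{H}}$ is a finite linear combination of field values $Z(\omega, s_i)$, hence Gaussian by the process assumption. By the same Cauchy--Schwarz bound, $\ell_n \to \ell \circ \Phi$ surely, and in particular in distribution. The classical fact that a distributional limit of real Gaussians is Gaussian (from Lévy continuity applied to $e^{i t \mu_n - t^2 \sigma_n^2 / 2}$) finishes the argument.

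The main obstacle is conceptual rather than technical: one needs the RKHS analogue of \emph{``dual elements can be reached from point evaluations''} (here the density of kernel sections plays the role of Dirac masses in $C(D)^*$), together with pointwise-in-$\omega$ limits to avoid requiring $L^2$-integrability of $\|Z\|_{\mathcal{H}}$, which is not assumed. Once these ingredients are in place the same skeleton as for $C(D)$ applies, and the extra comfort of the RKHS structure (a Hilbert self-dual with explicit generators) actually simplifies the approximation step.
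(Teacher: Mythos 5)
Your proposal is correct, and its core Gaussianity argument is essentially the paper's own: the paper likewise reduces an arbitrary $\ell \in \mathcal{H}^*$ to combinations of kernel sections, uses the reproducing property to identify $\langle Z, k(\cdot,s)\rangle_{\mathcal{H}}$ with the field value $Z_s$, and concludes via the fact that a limit of Gaussian variables is Gaussian. If anything, your finite-approximation-plus-L\'evy-continuity formulation is more careful than the paper's, which writes $\ell$ directly as an infinite series $\sum_{i=1}^{\infty} a_i k(x_i,\cdot)$ of kernel sections, a representation that in general only holds as a limit of finite linear combinations, exactly as you set it up. The genuine difference lies in the well-definedness step: the paper does not argue measurability of $\Phi$ inside the RKHS at all, but instead remarks (before the theorem statement) that product measurability is inherited from \Cref{th:continuous_product_meas} because an RKHS over a compact metric space embeds in $C(D)$ \emph{provided the reproducing kernel is continuous} --- an assumption that does not appear in the theorem as stated. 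Your direct route --- separability of $\mathcal{H}$ gives coincidence of the cylindrical and Borel $\sigma$-algebras, point evaluations are measurable by the process definition, and the Cauchy--Schwarz bound $|\langle \Phi(\omega), h_n - h\rangle_{\mathcal{H}}| \leq \|\Phi(\omega)\|_{\mathcal{H}}\|h_n - h\|_{\mathcal{H}}$ transfers measurability from kernel sections to all of $\mathcal{H}^*$ --- is self-contained, avoids any continuity hypothesis on $k$, and thus proves the statement in the generality in which it is actually formulated; this is a modest but real strengthening over the paper's treatment.
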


\begin{theorem}\label{th:gaussianity_induced_process_hilbert}
    Let $\mu$ be a Gaussian measure on a separable 
    RKHS $\mathcal{H}$ of functions 
    over a compact metric space $D$. 
    Then, letting $\Omega=\mathcal{H}$ and $\mathcal{F}$ be the Borel sigma algebra on $\mathcal{H}$,
    the collection of random variables
    \[
        Z_s:\left(\Omega,\mathcal{F}, \mu\right)\rightarrow
            \left(\mathbb{R}, \mathcal{B}\left(\mathbb{R}\right)\right),~ \omega\mapsto
            \delta_s\left(\omega\right)
    \]
    for all $s \in D$ is a Gaussian process with paths
    in $\mathcal{H}$ which induces $\mu$ on $\mathcal{H}$.
\end{theorem}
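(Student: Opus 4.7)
The plan is to mirror the strategy used for Theorem~\ref{th:gaussianity_induced_process} in the continuous function case, exploiting the crucial fact that in an RKHS the evaluation functionals $\delta_s$ already belong to the continuous dual $\mathcal{H}^*$. This replaces the approximation-of-evaluation-by-continuous-linear-functionals step, which was the delicate ingredient in the $C(D)$ case, with a direct appeal to the defining property of an RKHS.

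First, I would check that each $Z_s$ is a well-defined real-valued random variable on $(\mathcal{H}, \mathcal{B}(\mathcal{H}), \mu)$. By the reproducing property, $\delta_s : \mathcal{H} \to \mathbb{R}$ is a bounded linear functional, hence Borel measurable, so $Z_s$ is a proper random variable. Second, to establish Gaussianity of all finite-dimensional distributions of $(Z_s)_{s\in D}$, I would fix $s_1,\dots,s_n \in D$ and $a_1,\dots,a_n \in \mathbb{R}$ and consider the linear combination
\begin{align*}
    \sum_{i=1}^n a_i Z_{s_i} \;=\; \sum_{i=1}^n a_i \delta_{s_i},
\end{align*}
which again lies in $\mathcal{H}^*$ by linearity. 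By definition of a Gaussian measure on a separable Banach space, the pushforward of $\mu$ by this functional is Gaussian on $\mathbb{R}$. The Cram\'er--Wold device then yields that $(Z_{s_1},\dots,Z_{s_n})$ is a Gaussian random vector, so $Z$ is a Gaussian process.

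Third, I would verify that the sample paths of $Z$ lie in $\mathcal{H}$ and that $\mu$ is indeed the induced measure. For any $\omega \in \Omega = \mathcal{H}$ the trajectory $s \mapsto Z_s(\omega) = \delta_s(\omega) = \omega(s)$ is the function $\omega$ itself, which belongs to $\mathcal{H}$ by construction; hence the path map $\Phi : \omega \mapsto Z(\cdot,\omega)$ is simply the identity on $\mathcal{H}$, from which $\Phi_{\#}\mu = \mu$ follows immediately.

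The only point requiring a bit of care is that $\mathcal{B}(\mathcal{H})$ is the appropriate sigma-algebra for the induced measure statement: since $\mathcal{H}$ is a separable Banach space, $\mathcal{B}(\mathcal{H})$ coincides with the cylindrical sigma-algebra generated by $\mathcal{H}^*$, which already contains $\{\delta_s : s\in D\}$, so the identification of $\Phi$ with the identity and the equality $\Phi_{\#}\mu=\mu$ are unproblematic at the measure-theoretic level. I expect this sigma-algebra bookkeeping to be the only nontrivial ingredient; everything else is immediate once one notices that the reproducing property inserts all evaluation functionals into the continuous dual.
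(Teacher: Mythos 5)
Your proposal is correct and follows essentially the same route as the paper, which handles this theorem by noting that the argument for the $C(D)$ case carries over verbatim once one observes that the evaluation functionals $\delta_s$ lie in $\mathcal{H}^*$ by the reproducing property: linear combinations of evaluations are continuous linear functionals, hence Gaussian under $\mu$, and the path map is the identity on $\mathcal{H}$. Your explicit mention of Cram\'er--Wold and of the Borel/cylindrical $\sigma$-algebra coincidence on a separable Banach space just spells out details the paper leaves implicit.
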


The question of whether GP sample paths lie in an RKHS has been 
widely studied in the litterature 
\citep{steinwart_mercer,steinwart_mercer_supplementary}. 
One of the most well-known results in this domain is a negative one, namely 
that for a GP with continuous covariance kernel and almost-sure 
sample paths, the probability that the trajectories lie within the 
RKHS associated to the kernel of the process is zero \citep{Driscoll1973TheRK,Lukic2001}. 
Recent works have aimed at finding "larger" RKHS that contain the paths of the process.
It turns out that for a broad class of GPs, one can find 
an 'interpolating' RKHS lying between the RKHS of the kernel of the process and $L^2(\nu)$ (for some measure $\nu$) 
that contains the sample paths almost surely \citep[Corollary 5.3]{steinwart_mercer_supplementary}. 

We here only consider kernels that are bounded on the diagonal: 
$k(s, s) < \infty,\text{ all }s\in D$ (as is the case for all the usual kernels). 
Then, \citet[Lemma 5.1, Theorem 5.3]{steinwart_mercer} guarantee 
that the conditions required for the sample paths to be contained in powers 
of the base RKHS hold. Under these conditions, there are results that guarantee 
the existence of an RKHS containing the trajectories of the process 
with probability $1$. The RKHS depends on the eigenvalues of the operator 
\begin{align*}
    T_k(f) := \int_D k(\cdot, s)f(s)d\nu(s),~ f\in L_2(\nu),
\end{align*}
where $\nu$ is any finite Borel measure supported on $D$. 
The embedding RKHS is then constructed as a power $\mathcal{H}_k^{\theta}$
of the RKHS $\mathcal{H}_{k}$ of the kernel \citep[Definition 4.12]{Kanagawa2018GaussianPA}.

\begin{theorem}{[\citet[Theorem 4.12]{Kanagawa2018GaussianPA}, \citet[Theorem 5.2]{steinwart_mercer_supplementary}]}\label{th:sample_path_rkhs}
    Let $Z$ be a Gaussian process over a compact 
    domain $D \subset \mathbb{R}^d$ 
    with covariance kernel $k$. Let also $\left(\lambda_i, \phi_i\right)_{i\in\mathbb{N}}$ 
        be the eigensystem of the operator $T_k$. Then, provided 
    $\sum_{i\in\mathbb{N}} \lambda_i^{1 - \theta} < \infty$,
    there exists a version of $Z$ whose sample paths 
        lie in $\mathcal{H}_k^{\theta}$ with probability $1$.
\end{theorem}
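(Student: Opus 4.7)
The plan is to prove this via Mercer's theorem plus the Karhunen--Lo\`eve expansion and a single norm computation in $\mathcal{H}_k^{\theta}$. Since $k$ is continuous and bounded on the diagonal over the compact $D$, and $\nu$ is a finite Borel measure on $D$, the integral operator $T_k$ is self-adjoint, positive, and trace class on $L^2(\nu)$; Mercer's theorem then yields eigenpairs $(\lambda_i,\phi_i)_{i\in\mathbb{N}}$ with $\lambda_i\geq 0$ and an orthonormal expansion $k(s,t) = \sum_i \lambda_i \phi_i(s)\phi_i(t)$ that is (uniformly) convergent under the hypotheses invoked just above the statement (namely \citet[Lemma 5.1, Theorem 5.3]{steinwart_mercer}).

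Next I would build the candidate version of $Z$ from the Karhunen--Lo\`eve expansion. Define the random variables $\xi_i := \lambda_i^{-1/2}\int_D Z_s\, \phi_i(s)\, d\nu(s)$ for the indices where $\lambda_i>0$, and verify using the Gaussianity of $Z$ and orthonormality of the $\phi_i$ in $L^2(\nu)$ that the $\xi_i$ are jointly Gaussian, centred, and uncorrelated, hence iid $\mathcal{N}(0,1)$. Setting
\begin{equation*}
\tilde{Z}_s := \sum_{i\in\mathbb{N}} \sqrt{\lambda_i}\, \xi_i\, \phi_i(s),
\end{equation*}
one shows that $\tilde Z$ has the same finite-dimensional distributions as $Z$ (the covariances match via Mercer) and so defines a version of $Z$ on a suitable probability space.

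Now invoke the definition of the power RKHS \citep[Definition 4.12]{Kanagawa2018GaussianPA}: an element $f = \sum_i a_i \phi_i$ belongs to $\mathcal{H}_k^{\theta}$ if and only if $\sum_i a_i^2 \lambda_i^{-\theta} < \infty$, and then $\|f\|_{\mathcal{H}_k^{\theta}}^2 = \sum_i a_i^2 \lambda_i^{-\theta}$. Applied formally to $\tilde Z$ with coefficients $a_i = \sqrt{\lambda_i}\,\xi_i$, this gives $\|\tilde Z\|_{\mathcal{H}_k^{\theta}}^2 = \sum_i \lambda_i^{1-\theta}\xi_i^2$. Taking expectations and using Tonelli (all terms are nonnegative) yields $\mathbb{E}\bigl[\|\tilde Z\|_{\mathcal{H}_k^{\theta}}^2\bigr] = \sum_i \lambda_i^{1-\theta}$, which is finite by hypothesis. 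Consequently $\|\tilde Z\|_{\mathcal{H}_k^{\theta}} < \infty$ almost surely, i.e.\ $\tilde Z \in \mathcal{H}_k^{\theta}$ with probability one, which is the desired conclusion.

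The main obstacle is the technical bookkeeping surrounding the KL step: one must (i) have a jointly measurable version of $Z$ in order to define the integrals $\int_D Z_s \phi_i(s)\, d\nu(s)$, (ii) justify that the partial sums $\sum_{i\leq N}\sqrt{\lambda_i}\,\xi_i\,\phi_i$ converge in $\mathcal{H}_k^{\theta}$ to an element whose pointwise evaluations agree almost surely with $\tilde Z_s$, and (iii) verify that the resulting process is indeed a version of the original $Z$ rather than merely equidistributed. These are precisely the ingredients assembled in the cited works, so the proposal is essentially to assemble Mercer's decomposition, the KL expansion, and the elementary integrability check $\sum_i \lambda_i^{1-\theta}<\infty$ into a single argument.
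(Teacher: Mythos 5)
Note first that the paper itself contains no proof of \Cref{th:sample_path_rkhs}: the statement is imported by citation from \citet[Theorem 4.12]{Kanagawa2018GaussianPA} and \citet[Theorem 5.2]{steinwart_mercer_supplementary}, so the only available comparison is with the argument in those sources. Your Mercer--Karhunen--Lo\`eve route, with the norm computation $\mathbb{E}\bigl[\|\tilde Z\|_{\mathcal{H}_k^{\theta}}^2\bigr]=\sum_{i}\lambda_i^{1-\theta}<\infty$ at its core, is essentially that same proof (for a centred process, which is what the cited sources assume), and the technical obstacles you flag --- pointwise well-definedness of $\mathcal{H}_k^{\theta}$ as an RKHS, joint measurability of $Z$ needed to define the $\xi_i$, and upgrading equality of finite-dimensional laws to a genuine modification via the fact that norm convergence in an RKHS implies pointwise convergence --- are precisely the points those references supply; the proposal is therefore correct in approach and not a different route.
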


In particular, for GPs 
with Gaussian kernels or Mat\'ern kernels, one can always find an RKHS 
that contains the sample paths of the GP with probability $1$, as 
the following results from \cite{Kanagawa2018GaussianPA} guarantee:

\begin{corollary}[Squared Exponential Random Fields, \citet{Kanagawa2018GaussianPA}]
    If $Z$ is a Gaussian random field with squared exponential kernel $k$ over 
    a compact domain $D \subset \mathbb{R}^d$ with Lipschitz boundary, then 
    for any $0 < \theta < 1$ there exists a version 
    of $Z$ that lies in $\mathcal{H}^{\theta}_k$ with probability $1$.

\end{corollary}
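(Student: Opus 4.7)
The plan is to deduce this corollary directly from the preceding theorem (the result cited from \citet{Kanagawa2018GaussianPA} and \citet{steinwart_mercer_supplementary}) by verifying that the summability condition $\sum_{i\in\mathbb{N}} \lambda_i^{1-\theta} < \infty$ holds for every $\theta \in (0,1)$ when $k$ is a squared exponential kernel on a compact domain with Lipschitz boundary. Once this is in hand, the theorem yields the version of $Z$ with sample paths in $\mathcal{H}_k^{\theta}$ almost surely, and there is nothing else to prove.

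First, I would recall the key ingredient: for a squared exponential kernel on a bounded domain equipped with a finite Borel reference measure $\nu$ (for instance, the Lebesgue measure restricted to $D$), the eigenvalues $\lambda_i$ of the integral operator $T_k$ decay at a \emph{super-polynomial}, in fact geometric, rate. Precisely, there exist constants $C, c > 0$ (depending on the lengthscale, on $d$, and on $D$) such that $\lambda_i \le C \exp(-c\, i^{1/d})$. This bound is classical and appears, for the one-dimensional case, in the eigen-decomposition computations of \citet{rasmussen_williams}, and in the form stated above in the references collected by \citet{Kanagawa2018GaussianPA}. The role of the Lipschitz boundary hypothesis is to allow extension/restriction arguments that transfer spectral estimates from $\mathbb{R}^d$ to $D$ without losing the decay rate; this is where I would be most careful, since it is the only nontrivial geometric assumption used.

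Given this decay, the verification of the summability condition is elementary: for any fixed $\theta \in (0,1)$ one has $1-\theta > 0$, so
\begin{equation*}
    \sum_{i \in \mathbb{N}} \lambda_i^{1-\theta}
    \;\le\; \sum_{i \in \mathbb{N}} C^{1-\theta} \exp\!\bigl(-c(1-\theta)\, i^{1/d}\bigr)
    \;<\; \infty,
\end{equation*}
the last sum being a standard stretched-exponential series that converges for every positive exponent. Hence \Cref{th:sample_path_rkhs} applies and gives a version of $Z$ with sample paths in $\mathcal{H}_k^{\theta}$ with probability one.

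The main obstacle I anticipate is not the summation step, which is routine, but a clean justification of the eigenvalue bound on a general compact Lipschitz domain. On $\mathbb{R}^d$ with a Gaussian reference measure the spectrum is explicit and the exponential decay is immediate; on an arbitrary Lipschitz $D$ one must either (i) quote the result in the form stated by \citet{Kanagawa2018GaussianPA}, or (ii) argue via Weyl-type bounds using the smoothness (real-analyticity) of the squared exponential kernel, which forces the integral operator to be of trace class with rapidly decaying singular values. In either case the proof is a one-line citation followed by the elementary sum bound displayed above, so I would present it as a short corollary to \Cref{th:sample_path_rkhs} with the eigenvalue decay cited explicitly from the literature.
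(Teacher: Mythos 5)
Your proposal is correct and takes essentially the same route as the paper: the paper gives no independent proof but states the corollary as a direct consequence of \Cref{th:sample_path_rkhs} cited from \citet{Kanagawa2018GaussianPA}, whose argument is exactly yours --- verify $\sum_{i}\lambda_i^{1-\theta}<\infty$ for every $\theta\in(0,1)$ using the stretched-exponential eigenvalue decay $\lambda_i \le C\exp(-c\,i^{1/d})$ of the squared exponential kernel on a compact Lipschitz domain, then invoke the theorem. Your identification of the eigenvalue decay as the only nontrivial ingredient (and of the Lipschitz boundary as the hypothesis enabling it) is precisely where the cited reference places the work as well.
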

\begin{corollary}[Mat\'ern Random Fields and Sobolev Spaces, \citet{Kanagawa2018GaussianPA}]\label{th:matern_path}
    When $Z$ is a M\'atern Gaussian random field 
    with Mat\'ern kernel $k^{\mathrm{Mat}}_{\alpha,\lambda}$ 
    of order $\alpha$ and lengthscale $\lambda$
    over a domain $D\subset \mathbb{R}^d$ with Lipschitz boundary, then 
    \cite[Corollary 4.15]{Kanagawa2018GaussianPA} guarantees 
    that there exists a version of $Z$ that lies in 
    $\mathcal{H}_{k^{\mathrm{Mat}}_{\alpha', \lambda'}}$ with 
    probability $1$ for all $\alpha', \lambda'>0$ satisfying 
    $\alpha > \alpha' + d/2$, provided that $D$ satisfies an \textit{interior cone 
    condition} (see \cite[Definition 4.14]{Kanagawa2018GaussianPA}).
\end{corollary}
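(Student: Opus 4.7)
The statement is essentially a specialization of \cite[Corollary 4.15]{Kanagawa2018GaussianPA}, which the authors invoke directly. To explain the underlying logic, the plan is to feed the Mat\'ern kernel into \Cref{th:sample_path_rkhs} and then identify the resulting fractional-power RKHS embedding with a coarser-scale Mat\'ern RKHS.

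First, I would rely on the standard identification of $\mathcal{H}_{k^{\mathrm{Mat}}_{\alpha,\lambda}}$ on a domain satisfying an interior cone condition as being norm-equivalent to the Sobolev space $H^{\alpha + d/2}(D)$, together with Weyl-type asymptotics for elliptic integral operators. These give the eigenvalue decay $\lambda_i \asymp i^{-(2\alpha+d)/d}$ for $T_{k^{\mathrm{Mat}}_{\alpha,\lambda}}$ on $L^2(D,\nu)$ with $\nu$ a finite Borel measure supported on $D$. The series $\sum_i \lambda_i^{1-\theta}$ then converges exactly when $\theta < 2\alpha/(2\alpha+d)$, and for any such $\theta$, \Cref{th:sample_path_rkhs} produces a version of $Z$ with paths almost surely in $\mathcal{H}_{k^{\mathrm{Mat}}_{\alpha,\lambda}}^{\theta}$.

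Second, I would identify $\mathcal{H}_{k^{\mathrm{Mat}}_{\alpha,\lambda}}^{\theta}$, defined spectrally from the eigensystem of $T_k$ as in \cite[Definition 4.12]{Kanagawa2018GaussianPA}, with the Sobolev space $H^{\theta(\alpha+d/2)}(D)$ up to norm equivalence. A bookkeeping calculation shows that the two conditions $\alpha' + d/2 \le \theta(\alpha+d/2)$ and $\theta < 2\alpha/(2\alpha+d)$ admit a common $\theta$ precisely when $\alpha > \alpha' + d/2$; in that case the continuous embedding
\[
    \mathcal{H}_{k^{\mathrm{Mat}}_{\alpha,\lambda}}^{\theta} \hookrightarrow H^{\alpha'+d/2}(D) \cong \mathcal{H}_{k^{\mathrm{Mat}}_{\alpha',\lambda'}}
\]
transports almost-sure membership into the target Mat\'ern RKHS, irrespective of $\lambda'$, since two Mat\'ern RKHS of the same order but different lengthscales are norm-equivalent on bounded domains.

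The principal technical obstacle is the identification step: characterizing the Mat\'ern RKHS on a rough domain and pinpointing the fractional power $\mathcal{H}_k^{\theta}$ as a Sobolev space both require careful extension/restriction arguments for domains meeting only an interior cone condition. These technicalities are handled in \cite{Kanagawa2018GaussianPA}, which is why it is cleanest here to cite \cite[Corollary 4.15]{Kanagawa2018GaussianPA} verbatim rather than reprove it; with those ingredients granted, the corollary collapses to the elementary inequality $\theta(\alpha+d/2) > \alpha' + d/2$ exhibited above.
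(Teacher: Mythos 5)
Your proposal is correct and takes essentially the same approach as the paper, which gives no independent proof of this corollary: the statement is imported directly from \citet[Corollary 4.15]{Kanagawa2018GaussianPA}, exactly as you conclude is the cleanest course. Your supporting sketch is also sound --- the decay $\lambda_i \asymp i^{-(2\alpha+d)/d}$ gives convergence of $\sum_i \lambda_i^{1-\theta}$ iff $\theta < 2\alpha/(2\alpha+d)$, and since $\sup_\theta \theta(\alpha+d/2) = \alpha$ over that range, a $\theta$ with $\mathcal{H}_{k^{\mathrm{Mat}}_{\alpha,\lambda}}^{\theta} \cong H^{\theta(\alpha+d/2)}(D) \hookrightarrow H^{\alpha'+d/2}(D)$ exists precisely when $\alpha > \alpha' + d/2$ --- which is a faithful reconstruction of the argument behind the cited result rather than a different route.
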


Wrapping everything together, we can formulate a sufficient condition for a Gaussian process to 
induce a Gaussian measure on its space of trajectories:
\begin{corollary}\label{th:lemma_always_induce}
    Let $\left(\Omega,\mathcal{F},\mathbb{P};Z(\omega,x),x\in D\right)$ be a Gaussian process 
    on a compact metric space $D$ with covariance kernel $k$ that is continuous and 
    bounded on the diagonal. 
    Then there exists $0 < \theta \leq 1$ such that $Z$ induces a Gaussian measure 
    on $\mathcal{H}_k^{\theta}$.
\end{corollary}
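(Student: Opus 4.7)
The plan is to chain the two main prerequisites of this section: \Cref{th:sample_path_rkhs}, which exhibits a version of $Z$ with sample paths in a suitable power RKHS $\mathcal{H}_k^\theta$ once an eigenvalue summability condition holds, and \Cref{th:gaussianity_induced_measure_hilbert}, which promotes any such version into a Gaussian Borel measure on the separable RKHS carrying its trajectories. The whole task therefore reduces to producing some $\theta \in (0,1]$ for which the hypothesis of \Cref{th:sample_path_rkhs} is met under the given assumptions on $k$.

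First I would fix a finite Borel measure $\nu$ on $D$ with full support, so that the integral operator
\[
    T_k(f) \;=\; \int_D k(\cdot,s)\,f(s)\,d\nu(s)
\]
is well defined on $L^2(\nu)$. Continuity of $k$ on the compact metric space $D \times D$ yields uniform boundedness of the kernel and, combined with symmetry and positive semi-definiteness, allows one to invoke Mercer's theorem. This gives an eigensystem $(\lambda_i,\phi_i)_{i\in\mathbb{N}}$ satisfying $\sum_i \lambda_i = \int_D k(s,s)\,d\nu(s) < \infty$, placing $T_k$ in the trace class.

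Assuming $\theta \in (0,1]$ has been produced with $\sum_i \lambda_i^{1-\theta} < \infty$, \Cref{th:sample_path_rkhs} furnishes a version $\tilde Z$ of $Z$ whose paths lie in $\mathcal{H}_k^\theta$ with probability $1$. Continuity of $k$ on a compact metric space ensures separability of $\mathcal{H}_k$ and hence of $\mathcal{H}_k^\theta$, so \Cref{th:gaussianity_induced_measure_hilbert} applies to $\tilde Z$ and delivers a Gaussian Borel measure on $\mathcal{H}_k^\theta$ which, by construction, is the push-forward induced by the process.

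The genuinely delicate step is the extraction of $\theta$: trace-class of $T_k$ gives $\sum_i \lambda_i < \infty$, but this cannot be promoted in general to $\sum_i \lambda_i^{1-\theta} < \infty$ for some $\theta>0$, as sequences of the form $\lambda_i \asymp 1/(i \log^2 i)$ already show. Its availability therefore relies on quantitative eigenvalue decay, which is consistent with the preceding corollaries on squared-exponential and Mat\'ern kernels where the decay is exponential or polynomial and a fortiori fast enough. Once this $\theta$ is in hand the remainder of the argument is purely mechanical: feed \Cref{th:sample_path_rkhs} into \Cref{th:gaussianity_induced_measure_hilbert} and read off the conclusion.
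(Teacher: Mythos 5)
Your two-step chaining---\Cref{th:sample_path_rkhs} to get a version with paths in $\mathcal{H}_k^{\theta}$, then \Cref{th:gaussianity_induced_measure_hilbert} to turn that version into a Gaussian Borel measure on $\mathcal{H}_k^{\theta}$---is exactly the route the paper intends: the paper gives no separate proof of \Cref{th:lemma_always_induce} and presents it as "wrapping everything together", i.e.\ precisely this composition. So the mechanical part of your argument matches the paper's.

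The step you flag as delicate is, however, a genuine gap, and it is a gap in the paper's treatment as much as in your write-up. The paper's only justification for the existence of a suitable $\theta$ is the sentence preceding the corollary, which appeals to \citet[Lemma 5.1, Theorem 5.3]{steinwart_mercer}; those results secure the standing assumptions of the power-RKHS framework (measurability, the embedding of $\mathcal{H}_k$ into $L_2(\nu)$, trace-class of $T_k$), but they do not produce any $\theta \in (0,1]$ with $\sum_{i} \lambda_i^{1-\theta} < \infty$. Your counterexample shows this cannot be repaired by summability reasoning: taking a uniformly bounded orthonormal system (e.g.\ the trigonometric basis on a circle) and $\lambda_i \asymp 1/(i \log^2 i)$, the series $\sum_i \lambda_i \phi_i(s)\phi_i(t)$ converges uniformly, so $k$ is continuous and bounded on the diagonal, yet $\sum_i \lambda_i^{1-\theta} = \infty$ for every $\theta \in (0,1]$. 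Moreover, the results behind \Cref{th:sample_path_rkhs} (\citet[Theorem 4.12]{Kanagawa2018GaussianPA}) also contain the converse direction---divergence of the sum forces the paths to lie outside $\mathcal{H}_k^{\theta}$ almost surely---so for such a kernel the conclusion of the corollary itself fails, not merely this proof of it. The statement becomes correct (and your argument complete) only under an additional quantitative eigenvalue-decay hypothesis, e.g.\ $\lambda_i = O(i^{-1-\epsilon})$ for some $\epsilon > 0$, which is what actually holds in the squared-exponential and Mat\'ern cases treated in the two preceding corollaries. In short: you reproduced the paper's intended argument faithfully and correctly located the one point where it is unsound; that point cannot be filled from the stated hypotheses.
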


\begin{remark}
    Note that the construction of the power of a RKHS depends on the choice of the measure $\nu$. This is 
    not a significant handicap since the goal of \Cref{th:lemma_always_induce} is to show that under 
    given conditions on a GP one can always induce a measure from it. Nevertheless, recent results 
    \citep{karvonen2021small} provide constructions of RKHS containing the sample paths that do not depend 
    on a given measure and are "smaller" than constructions involving powers of RKHS. 
    These constructions are mostly useful in providing more fine-grained descriptions of 
    sample path properties for infinitely smooth kernels \citep[Chapter 2]{karvonen2021small}. 
    We refer the interested reader to the aforementioned litterature for more details.
\end{remark}

\begin{remark}
    In practice, when working with derivative-type observations, it is often preferable to have 
    simple conditions on the covariance kernel that enforce the path to live in some 
    Sobolev space that makes the observation operator under consideration a bounded 
    one. Useful results to that end can be found in \citep{SCHEUERER20101879}. 
    In particular, it is shown that continuity on the diagonal of the generalized mixed 
    derivatives of the covariance kernel up to order $k$ ensures that the 
    sample paths lie in the local Sobolev space $W^{k, 2}_{loc}(D)$ of order $k$ almost-surely 
    \citep[Theorem 1]{SCHEUERER20101879}.
\end{remark}

\section{Disintegration of Gaussian Measures under Operator Observations}\label{sec:disintegration}
Now that we have introduced the equivalence of the process and the measure approaches, we consider the posterior in the Gaussian measure formulation of conditioning. In this setting, conditional laws are defined using the language of \textit{disintegrations} of measures.
The treatment presented here will follow that in \citet{TARIELADZE2007851} and extend some of the theorems therein.

In the following, we will let $X$ be a separable Banach space 
of functions over an arbitrary domain $D$ 
such that the measure-processes correspondence introduced in 
\Cref{sec:process_measure_equivalence} holds, 
and use $\mu$ to denote a Gaussian measure on $X$ and $Z$ for the corresponding 
associated Gaussian process on $D$.
Again $G:X\rightarrow \mathbb{R}^{\datdim}$ will denote a bounded linear operator. 

\begin{definition}\label{def:disintegration}
Given measurable spaces $\left(X, \mathcal{A}\right)$ and $\left(Y, \mathcal{C}\right)$, a probability measure $\mu$ on $X$ and a measurable mapping $G:X\rightarrow Y$, a disintegration of $\mu$ with respect to $G$ is a mapping $\tilde{\mu}:\mathcal{A}\times Y \rightarrow \left[0, 1\right]$ satisfying the following properties:
\begin{enumerate}
    \item For each $y \in Y$ the set function $\tilde{\mu}(\cdot, y)$ is a probability measure on $X$ and for each $A \in \mathcal{A}$ the function $\tilde{\mu}(A, \cdot)$ is $\mathcal{C}$-measurable.
    \item There exists $Y_0 \in \mathcal{C}$ with $\mu\circ G^{-1}\left(Y_0\right)=1$ such that for all $y\in Y_0$ we have $\lbrace y \rbrace \in \mathcal{C}$ and for each $y\in Y_0$, the probability measure $\tilde{\mu}(\cdot, y)$ is concentrated on the fiber $G^{-1}\left(\left\lbrace y \right\rbrace\right)$ that is:
    \[
    \tilde{\mu}\left(G^{-1}\left(\left\lbrace y \right\rbrace\right), y\right) = 1.
    \]
    \item The measure $\mu$ may be written as a mixture of the family 
        $\left(\tilde{\mu}(\cdot, y)\right)_{y\in Y}$ with respect to the mixing 
        measure $\mu\circ G^{-1}$:
    \[
    \mu\left(A\right) = \int_Y \tilde{\mu}\left(A, y\right)d\left(\mu\circ G^{-1}\right)\left(y\right),~\forall A \in \mathcal{A}.
    \]
\end{enumerate}
We will use the notation $\mu_{|G=y}\left(\boldsymbol{\cdot}\right):=\tilde{\mu}\left(\boldsymbol{\cdot}, y\right)$ for the \textit{disintegrating measure}.
\end{definition}

The computation of the posterior then amounts to computing a disintegration of the prior with respect to the observation operator.
The existence of the disintegration is guaranteed by Theorem 3.11 in \citet{TARIELADZE2007851}, which we slightly generalize here to non-centered measures.

\begin{theorem}\label{th:tarieladze_disintegration}
    Let $X$, $Y$ be real separable Banach spaces and $\mu$ be a Gaussian measure on the Borel $\sigma$-algebra $\mathcal{B}\left(X\right)$ with mean element $m_{\mu}\in X$ and covariance operator $C_{\mu}:X^*\rightarrow X$. Let also $G:X\rightarrow Y$ be a bounded linear operator. Then, provided that the operator $C_{\nu} := GC_{\mu}G^*:Y^*\rightarrow Y$ has finite rank $\datdim$,
    there exists a continuous affine map $\tilde{m}_{\mu}:Y\rightarrow X$, a symmetric positive operator $\tilde{C}_{\mu}:X^*\rightarrow X$ and a disintegration $\left(\mu_{|G=y}\right)_{y\in Y}$ of $\mu$ with respect to $G$ such that for each $y\in Y$ the measure $\mu_{|G=y}$ is Gaussian with mean element $\tilde{m}_{\mu}(y)$ and covariance operator $\tilde{C}_{\mu}$. Furthermore, 
    for any $C_{\nu}$-representing sequence $y_i^*,~i=1, ..., n$, 
    the mean and covariance are equal to
    \begin{align}
        \tilde{m}_{\mu}(y) &= m_{\mu} + \sum_{i=1}^{\datdim} \left\langle y - Gm_{\mu}, y_i^* \right\rangle C_{\mu} G^* y_i^*\label{eq:post_mean_disintegration}\\
        \tilde{C}_{\mu} &= C_{\mu} - \sum_{i=1}^{\datdim} \left\langle C_{\mu}G^* y_i^*, \boldsymbol{\cdot} \right\rangle C_{\mu} G^* y_i^*\label{eq:post_cov_disintegration}.
    \end{align}
    The mean element also satisfies $G\tilde{m}_{\mu}(y) =y$ for all $y\in Y_0:=Gm_{\mu}+C_{\nu}(Y^*)$.
\end{theorem}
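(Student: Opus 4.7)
The plan is to reduce the non-centered statement to the centered case established in \citet[Theorem 3.11]{TARIELADZE2007851} by a translation argument, and then to verify separately the claim about the range of $G\tilde{m}_{\mu}$. First, I would introduce the translated measure $\mu_0 := T_{\#}\mu$ where $T:X \to X$, $f \mapsto f - m_{\mu}$. Since $T$ is a homeomorphism, $\mu_0$ is a Borel probability measure, and it is Gaussian with mean element $0$ and the same covariance operator $C_\mu$ as $\mu$; both facts follow directly from the definitions in \eqref{eq:defmeanelement}--\eqref{eq:defcovarianceoperator} combined with the change-of-variables formula. Because $G$ is linear, $Gf = G(f - m_\mu) + Gm_\mu$, which yields the pushforward relation
\[
\mu \circ G^{-1}(B) \;=\; \mu_0 \circ G^{-1}(B - Gm_\mu), \qquad B \in \mathcal{B}(Y).
\]

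Next, since $GC_\mu G^* = C_\nu$ still has finite rank $\datdim$, the centered version of the theorem applies to $\mu_0$ and yields a disintegration $(\mu_{0,|G=z})_{z\in Y}$ along with a $C_\nu$-representing sequence $y_1^*,\dotsc,y_{\datdim}^*$ and explicit formulas for the Gaussian mean $\tilde{m}_{\mu_0}(z) = \sum_{i=1}^{\datdim} \langle z, y_i^*\rangle C_\mu G^* y_i^*$ and covariance $\tilde{C}_{\mu_0} = C_\mu - \sum_{i=1}^{\datdim}\langle C_\mu G^* y_i^*, \cdot\rangle C_\mu G^* y_i^*$. I would then define the candidate disintegration of $\mu$ by
\[
\mu_{|G=y}(A) \;:=\; \mu_{0,|G=y-Gm_\mu}\!\left(A - m_\mu\right),\qquad A \in \mathcal{B}(X),\ y \in Y.
\]

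The main verification is that this indeed satisfies the three properties of \Cref{def:disintegration}. Measurability of $y \mapsto \mu_{|G=y}(A)$ is inherited from the measurability in the centered case together with the continuity of $y \mapsto y - Gm_\mu$. Concentration on the fiber $G^{-1}(\{y\})$ holds because $f \in G^{-1}(\{y\})$ iff $f - m_\mu \in G^{-1}(\{y - Gm_\mu\})$. The mixture identity follows by substituting $y' = y - Gm_\mu$ in the centered mixture identity and invoking the pushforward relation above. Each $\mu_{|G=y}$ is then Gaussian, as the image of a Gaussian measure under a translation, with mean $m_\mu + \tilde{m}_{\mu_0}(y - Gm_\mu)$, producing exactly \eqref{eq:post_mean_disintegration}, and with covariance operator $\tilde{C}_{\mu_0}$ (translation leaves covariance unchanged), producing \eqref{eq:post_cov_disintegration}. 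Continuity and affineness of $y \mapsto \tilde{m}_\mu(y)$, as well as symmetry and positivity of $\tilde{C}_\mu$, are then immediate from the explicit expressions.

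Finally, for the assertion $G\tilde{m}_\mu(y) = y$ on $Y_0 := Gm_\mu + C_\nu(Y^*)$, I would write an arbitrary $y \in Y_0$ as $y = Gm_\mu + C_\nu z^*$ with $z^* \in Y^*$, so that $y - Gm_\mu \in C_\nu(Y^*)$. Applying $G$ to \eqref{eq:post_mean_disintegration} gives
\[
G\tilde{m}_\mu(y) \;=\; Gm_\mu + \sum_{i=1}^{\datdim} \langle y - Gm_\mu, y_i^*\rangle\, C_\nu y_i^*,
\]
and the defining property of a $C_\nu$-representing sequence, namely $v = \sum_i \langle v, y_i^*\rangle C_\nu y_i^*$ for every $v \in C_\nu(Y^*)$, applied to $v = y - Gm_\mu$ closes the argument. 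The only delicate part I anticipate is the bookkeeping required to ensure that the measurable manipulations and the mixing identity survive the translation, and that the representing-sequence property of the centered case transfers unchanged; everything else is algebraic and essentially inherited from the centered theorem.
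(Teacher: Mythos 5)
Your proof is correct, but it takes a genuinely different route from the paper. You treat the centered theorem of \citet{TARIELADZE2007851} as a black box: translate $\mu$ to the centered measure $\mu_0 = T_{\#}\mu$, disintegrate $\mu_0$, and push the resulting family back under $f \mapsto f + m_\mu$, checking that the three properties of \Cref{def:disintegration} are equivariant under this translation (the fiber identity $G^{-1}(\{y\}) - m_\mu = G^{-1}(\{y - Gm_\mu\})$ and the pushforward relation $\mu\circ G^{-1}(B) = \mu_0 \circ G^{-1}(B - Gm_\mu)$ doing the work). The paper instead re-opens Tarieladze's proof: it defines the candidate conditional measures directly from the modified mean formula via \citet{TARIELADZE2007851}[Lemma 3.8], and then re-verifies measurability, fiber concentration, and \emph{the characteristic-functional mixture identity} of \citet{TARIELADZE2007851}[Proposition 3.2], performing the change of variables $y \mapsto y - Gm_\mu$ inside that integral. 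Your approach buys modularity and avoids any characteristic-function computation, making transparent that the non-centered statement is a purely formal consequence of the centered one; the paper's approach keeps explicit contact with the internal machinery (Lemmas 3.3, 3.4, 3.8, Corollary 3.7, Proposition 3.2), which it then reuses almost verbatim for the transitivity theorem (\Cref{th:transitivity_disintegration}) and the infinite-rank case (\Cref{th:tarieladze_disintegration_infinite}). Two small points of bookkeeping in your write-up: the identity $v = \sum_{i} \langle v, y_i^*\rangle C_\nu y_i^*$ for $v \in C_\nu(Y^*)$ is not literally the \emph{defining} property of a representing sequence but a consequence of it (it is \citet{TARIELADZE2007851}[Lemma 3.4], which the paper also invokes for exactly this step); and for fiber concentration you should state explicitly that the full-measure set is the translate $Y_0 = Y_0^{(0)} + Gm_\mu$ of the centered one, so that $\mu\circ G^{-1}(Y_0) = \mu_0\circ G^{-1}(Y_0^{(0)}) = 1$. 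Neither affects correctness.
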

~\\
The explicit formulae for the posterior mean and covariance provided by the above theorem require the use of \textit{representing sequences}.
\begin{definition}{\cite{TARIELADZE2007851}}
Given a Banach space $X$ and a symmetric positive operator $R:X^*\rightarrow X$, a family $(x_i^*)_{i\in I}$ of elements of $X^*$ is called $R$-representing if the following two conditions hold:
\begin{align*}
    \langle R x_i^*, x_j^* \rangle &= \delta_{ij},\\
    \sum_{i\in I} \langle R x_i^*, x^*\rangle^2 &= \langle R x^*, x^*\rangle,~\forall x^* \in X^*.
\end{align*}
\end{definition}
\begin{remark}\label{th:remark_rep_seq}
    In the case where $X$ is a finite-dimensional Hilbert space of dimension $\datdim$,
    one can explicitly compute an $R$-representing sequence by defining 
$x_i^*:=R^{-1/2}e_i,~i=1, ..., \datdim$ where $e_i,~i=1,...,\datdim$ is an orthonormal basis of $X$ (see \Cref{sec:math_and_proofs} for a proof). This fact will be used to link the posterior provided by \Cref{th:tarieladze_disintegration} to the usual formulae for Gaussian processes in the case 
of finite-dimensional data.
\end{remark}

\noindent Using \Cref{th:corresp_mean_cov} we can translate the disintegration provided by \Cref{th:tarieladze_disintegration} to the language of Gaussian processes in the case where $X$ is the Banach space $C(D)$ of continuous functions over a compact metric space $D$:

\begin{corollary}\label{th:disintegration_gp}
Let $Z$ be a Gaussian process on some domain $D$ with 
trajectories in a space $X$ such that either of the equivalence theorems 
\Cref{th:gaussianity_induced_measure} or \Cref{th:gaussianity_induced_measure_hilbert} 
hold. 
Furthermore, let $G:X\rightarrow Y$ be a linear bounded operator into 
a real separable Banach space $Y$. 
Denote by $C_{\mu}$ the covariance operator of the measure associated to the process $Z$. 
Provided the operator $C_{\nu}:= GC_{\mu}G^*$ has finite rank $\datdim$, then, 
for all $y \in Y$ the conditional law of $Z$ given $GZ=y$ is Gaussian with 
mean and covariance function given by, for all $s,s_1,s_2\in D$:
\begin{align*}
    \tilde{m}_s(y) &= \langle \tilde{m}_{\mu}(y), \delta_s \rangle 
    = m_s + \sum_{i=1}^{\datdim} \langle y - G m_{\boldsymbol{\cdot}},
    y^*_i \rangle (C_{\mu}G^*y^*_i)|_s,\\
    \tilde{k}(s_1, s_2) &= \langle \tilde{C}_{\mu} \delta_{s_2}, \delta_{s_1} 
    \rangle = k(s_1, s_2) -
    \sum_{i=1}^{\datdim} (C_{\mu}G^*y^*_i)|_{s_2} ~ (C_{\mu} G^*y^*_i)|_{s_1},
\end{align*}
where $m_s$ denotes the mean function of $Z$ and $Gm_{\boldsymbol{\cdot}}$ 
denotes application of the operator $G$ to the mean function seen as 
an element of $X$ and $\left(y_i^*\right)_{\i=1, ...,\datdim}$ is 
any $C_{\nu}$-representing sequence.
\end{corollary}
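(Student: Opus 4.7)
The plan is to reduce the corollary to a direct translation of \Cref{th:tarieladze_disintegration} into the process language by means of the correspondence established in \Cref{th:corresp_mean_cov}. First I would invoke whichever equivalence theorem applies (\Cref{th:gaussianity_induced_measure} or \Cref{th:gaussianity_induced_measure_hilbert}, as assumed in the hypotheses) to associate to the process $Z$ a Gaussian measure $\mu$ on $X$, with mean element $m_{\mu}$ and covariance operator $C_{\mu}$ satisfying $m_s=\langle m_{\mu},\delta_s\rangle$ and $k(s_1,s_2)=\langle C_{\mu}\delta_{s_2},\delta_{s_1}\rangle$. Since $G$ is bounded linear into a real separable Banach space $Y$ and $C_{\nu}=GC_{\mu}G^*$ is assumed of finite rank $\datdim$, the hypotheses of \Cref{th:tarieladze_disintegration} are met, so that disintegrations $\mu_{|G=y}$ exist, are Gaussian with mean $\tilde{m}_{\mu}(y)$ and covariance $\tilde{C}_{\mu}$, and admit the explicit formulae \eqref{eq:post_mean_disintegration}--\eqref{eq:post_cov_disintegration}.

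Next I would apply the converse direction of the equivalence theorems (\Cref{th:gaussianity_induced_process} or \Cref{th:gaussianity_induced_process_hilbert}) to the Gaussian measures $\mu_{|G=y}$ in order to obtain, for each $y\in Y$, a Gaussian process on $D$ whose distribution coincides with the conditional law of $Z$ given $GZ=y$. The key point enabling the translation is that the evaluation functionals $\delta_s$ belong to $X^*$, either by Riesz--Markov when $X=C(D)$ or by the reproducing property when $X$ is an RKHS; this ensures that the pairings $\langle \tilde{m}_{\mu}(y),\delta_s\rangle$ and $\langle \tilde{C}_{\mu}\delta_{s_2},\delta_{s_1}\rangle$ are well-defined, and by \Cref{th:corresp_mean_cov} they coincide with the conditional mean and covariance functions of $Z$.

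Finally, the explicit formulae would be obtained by substituting \eqref{eq:post_mean_disintegration} and \eqref{eq:post_cov_disintegration} into these duality pairings and exploiting linearity and boundedness of $\delta_s$ to commute it past the finite sum. Concretely, for the mean one writes
\[
\tilde{m}_s(y)=\langle m_{\mu},\delta_s\rangle+\sum_{i=1}^{\datdim}\langle y-Gm_{\mu},y_i^*\rangle\,\langle C_{\mu}G^*y_i^*,\delta_s\rangle,
\]
and identifies $\langle m_{\mu},\delta_s\rangle=m_s$, $\langle C_{\mu}G^*y_i^*,\delta_s\rangle=(C_{\mu}G^*y_i^*)|_s$, and $Gm_{\mu}=Gm_{\boldsymbol{\cdot}}$ (the latter being the application of $G$ to the mean function viewed as an element of $X$, which is legitimate since $m_{\mu}\in X$). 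An identical computation applied to \eqref{eq:post_cov_disintegration} with $\delta_{s_1},\delta_{s_2}$ yields the expression for $\tilde{k}(s_1,s_2)$.

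I expect the only delicate step to be the conceptual one, namely the careful identification of the Banach-space object $m_{\mu}$ with the mean function $s\mapsto m_s$ (and likewise of the action of $G$ on one with its action on the other); once this is made rigorous through \Cref{th:corresp_mean_cov}, together with membership of the $\delta_s$ in $X^*$, the rest is a routine substitution into the formulae of \Cref{th:tarieladze_disintegration}, with no additional analytic content beyond what that theorem already provides.
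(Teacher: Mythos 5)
Your proposal is correct and follows essentially the same route as the paper: the corollary is obtained there precisely by applying \Cref{th:tarieladze_disintegration} to the measure induced via the equivalence theorems and then translating the mean element and covariance operator back into the process language by pairing with the Dirac functionals $\delta_s$, as in \Cref{th:corresp_mean_cov}. Your observation that the whole argument hinges on $\delta_s\in X^*$ (Riesz--Markov for $C(D)$, the reproducing property for an RKHS) is exactly the point the paper itself emphasizes.
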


\textbf{Link to Finite-Dimensional Case:} 
When $G$ maps into a finite-dimensional Euclidean space and 
$X=C(D)$ for some compact metric space $D$, 
then one can explicitly compute representing sequences and duality pairings, 
allowing the conditional mean and covariance in \Cref{th:disintegration_gp} 
to be entirely written in terms of the prior mean and covariance function 
of the process, making the link to the Gaussian process conditioning formulae 
as found for example in \citet{tarantola}. 
Indeed, since the dual of $C(D)$ is the space of Radon measures on $D$, 
any bounded linear operator $G:C(D)\rightarrow \mathbb{R}^{\datdim}$ 
may be written as a collection of integral operators 
$GZ = \left(\int_D Z(s) d\lambda_i\left(s\right)\right)_{i=1, ..., \datdim}$ 
where the $\lambda_i$'s are Radon measures on $D$. 
This special form allows us to compute closed-from expressions for the 
conditional mean and covariance.

\begin{corollary}\label{th:link_tarantola}
Consider the situation of \Cref{th:disintegration_gp} and let $G:X\rightarrow \mathbb{R}^{\datdim}$.
Then the conditional law of $Z$ given $GZ=y$ is Gaussian with mean and covariance function given by, for all $s, s_1, s_2\in D$:
\begin{align}
    \tilde{m}_s(y) &= m_s
    - K_{sG}
    K_{GG}^{-1}
    \left(y - Gm_{\boldsymbol{\cdot}}\right)\label{eq:cond_mean_field_fin_dim},\\
    \tilde{k}(s_1, s_2) &= k(s_1, s_2) - K_{s_1G} K_{GG}^{-1} K_{s_2G}^T\label{eq:cond_cov_field_fin_dim}
\end{align}
where we have defined the following vectors and matrices:


 \begin{align}
     K_{sG} :&= \left(G_i k\left(\cdot, s\right)\right)_{i=1,..., \datdim}\in\mathbb{R}^{\datdim}\label{eq:forward_pt_cov_matrix},\\
     K_{GG} :&= \left(
         G_i\left(G_j k\left(\cdot, \cdot\right)\right)
     \right)_{i,j=1, ..., \datdim} \in \mathbb{R}^{\datdim \times \datdim},\label{eq:forward_cov_matrix}
 \end{align}

where $k\left(\boldsymbol{\cdot}, \boldsymbol{\cdot}\right)$ denotes the covariance function of $Z$. 
This corollary provides a Gaussian measure-based justification to previously 
used formulae \citep{sarkka2011linear,jidling2018integral,Purisha_2019,longi_2020}.
\end{corollary}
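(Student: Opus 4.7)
The plan is to specialize \Cref{th:disintegration_gp} to $Y = \mathbb{R}^{\datdim}$ and to sum the representing-sequence series in closed form. Since $\mathbb{R}^{\datdim}$ is a finite-dimensional Hilbert space and $C_{\nu} := G C_{\mu} G^*$ is symmetric positive of rank $\datdim$ by assumption, \Cref{th:remark_rep_seq} lets me take the explicit $C_{\nu}$-representing sequence $y_i^* := C_{\nu}^{-1/2} e_i$, with $(e_i)_{i=1,\dots,\datdim}$ the canonical basis of $\mathbb{R}^{\datdim}$. The key algebraic identity is then $\sum_{i=1}^{\datdim} y_i^* y_i^{*T} = C_{\nu}^{-1}$, which will collapse both the mean and covariance sums into closed-form matrix products.

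Next, I translate the abstract quantities $C_{\mu} G^* y^*$ and $C_{\nu}$ into the concrete vector $K_{sG}$ and matrix $K_{GG}$. Since the dual of $C(D)$ is the space of Radon measures on $D$, each coordinate $G_i$ is given by integration against a Radon measure, so the adjoint acts as $G^* y^* = \sum_i y_i^* G_i \in X^*$. Combining this with symmetry of $C_{\mu}$ and the identity $\langle C_{\mu} \delta_s, \delta_t\rangle = k(s,t)$ from \Cref{th:corresp_mean_cov}, a Fubini exchange yields
\begin{equation*}
(C_{\mu} G^* y^*)(s) = \langle G^* y^*, C_{\mu}\delta_s\rangle = (y^*)^T \bigl(G_i k(\cdot,s)\bigr)_{i=1}^{\datdim} = K_{sG}^T y^*,
\end{equation*}
and applying the same argument in both slots identifies the matrix of $C_{\nu}$ in the canonical basis as $e_i^T C_{\nu} e_j = G_i\bigl(G_j k(\cdot,\cdot)\bigr) = (K_{GG})_{ij}$, i.e.\ as exactly the matrix defined in \eqref{eq:forward_cov_matrix}.

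Substituting these two identifications and the representing sequence into the formulas of \Cref{th:disintegration_gp} gives
\begin{equation*}
\tilde m_s(y) - m_s = \sum_{i=1}^{\datdim} \bigl(y_i^{*T}(y - G m_{\boldsymbol{\cdot}})\bigr)(K_{sG}^T y_i^*) = K_{sG}^T \Bigl(\sum_{i=1}^{\datdim} y_i^* y_i^{*T}\Bigr)(y - Gm_{\boldsymbol{\cdot}}) = K_{sG}^T K_{GG}^{-1}(y - Gm_{\boldsymbol{\cdot}}),
\end{equation*}
and an identical manipulation of the quadratic covariance sum produces $\tilde k(s_1,s_2) = k(s_1,s_2) - K_{s_1 G}^T K_{GG}^{-1} K_{s_2 G}$, which are the claimed formulae \eqref{eq:cond_mean_field_fin_dim}--\eqref{eq:cond_cov_field_fin_dim}.

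The one genuinely nontrivial step is the interchange producing $G C_{\mu} \delta_s = K_{sG}$: it amounts to pushing each bounded functional $G_i$ inside the action of the covariance operator on a Dirac via Fubini, which is valid since $k(s,\cdot)$ is continuous and each representing Radon measure is finite on the compact domain $D$. Everything else is the routine linear-algebraic collapse described above, so I do not anticipate further obstacles.
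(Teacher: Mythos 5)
Your proof is correct and takes essentially the same route as the paper's own: the explicit representing sequence $y_i^* = C_{\nu}^{-1/2}e_i$ from \Cref{th:remark_rep_seq}, the identification $\left(C_{\mu}G^*y^*\right)\vert_s = K_{sG}^T y^*$ obtained by computing the adjoint of the Radon-measure form of the $G_i$, and the collapse of the sums via $\sum_{i=1}^{\datdim} y_i^* y_i^{*T} = C_{\nu}^{-1} = K_{GG}^{-1}$. One remark: your derivation produces a plus sign in the mean update, which agrees with \Cref{th:disintegration_gp}; the minus sign in \eqref{eq:cond_mean_field_fin_dim} is evidently a typo in the statement rather than an error in your argument.
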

The above corollary provides rigorous formulae for the conditional law under linear operator 
observations when the GP has trajectories that lie either in $C(D)$ or in some RKHS.


\textbf{Sequential Disintegrations and Update:}
We now turn to the situation where several stages of conditioning are performed sequentially. Let again $X$ be a real separable Banach space and consider two bounded linear operators $G_1:X\rightarrow Y_1$ and $G_2: X \rightarrow Y_2$, where $Y_1$ and $Y_2$ are also real separable Banach spaces. Then, if one views these operators as defining two stages of observations, there is two ways in which one can compute the posterior.

\begin{itemize}
    \item On the one hand, one can compute it in two steps by first computing the disintegration of $\mu$ under $G_1$ and then, for each $y_1\in Y_1$, compute the disintegration of $\mu_{|G_1=y_1}$ under $G_2$. 
    \item On the other hand, one can compute it in one go by considering the disintegration of $\mu$ with respect to the \textit{bundled} operator $G:X\rightarrow Y_1 \bigoplus Y_2,~x\mapsto G_1\left(x\right) \bigoplus G_2\left(x\right)$. From now on, we will denote this operator by $\left(G_1, G_2\right)$.
\end{itemize}
~\\
We show that these two approaches yield the same disintegration, as guaranteed by the following theorem.

\begin{theorem}\label{th:transitivity_disintegration}
    Let $X, Y_1,Y_2$ be real separable Banach spaces, $\mu$ be a Gaussian measure on
    $\mathcal{B}\left(X\right)$ with mean element $m_{\mu}$ and covariance
    operator $C_{\mu}:X^*\rightarrow X$. Also let $G_1:X\rightarrow
    Y_1$
    and $G_2:X\rightarrow Y_2$ be bounded linear operators.
    Suppose that both $C_{\nu_1}:= G_1 C_{\mu} G_1^*$ and $C_{\nu_2} :=
    G_2 C_{\mu} G_2^*$ have finite rank $\datdim_1$ and $\datdim_2$, respectively.
    Then
    \[  
        \mu_{|\left(G_1, G_2\right) = \left(y_1, y_2\right)}
        = \left(\mu_{|G_1=y_1}\right)_{|G_2=y_2},
    \]
    where the equality holds for almost all $(y_1, y_2)\in
    Y_1 \bigoplus Y_2$
    with respect to the pushforward measure $\mu\circ \left(G_1,
    G_2\right)^{-1}$ on $Y_1 \bigoplus Y_2$.
\end{theorem}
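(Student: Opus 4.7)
The plan is to construct the iterated disintegration on the right-hand side via two applications of \Cref{th:tarieladze_disintegration}, verify that it satisfies the three defining properties of a disintegration of $\mu$ with respect to the bundled operator $G := (G_1, G_2) : X \to Y_1 \bigoplus Y_2$, and then conclude via uniqueness of disintegrations that it coincides with $\mu_{|G=(y_1, y_2)}$ for almost every $(y_1, y_2)$. As a preliminary, I would check that all three relevant disintegrations are well-defined. The bundled operator is bounded and $G C_\mu G^*$ is a block operator whose diagonal consists of $C_{\nu_1}$ and $C_{\nu_2}$, hence of finite rank at most $\datdim_1 + \datdim_2$, so \Cref{th:tarieladze_disintegration} applies to $G$. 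For the iterated version, applying \Cref{th:tarieladze_disintegration} to $G_1$ produces the Gaussian measure $\mu_{|G_1=y_1}$ with covariance operator $\tilde{C}_\mu^{(1)} = C_\mu - \sum_{i=1}^{\datdim_1} \langle C_\mu G_1^* y_i^*, \cdot \rangle C_\mu G_1^* y_i^*$; then $G_2 \tilde{C}_\mu^{(1)} G_2^*$ differs from $C_{\nu_2}$ by a finite-rank operator and still has finite rank, which allows a second application of \Cref{th:tarieladze_disintegration} yielding the Gaussian $\left(\mu_{|G_1=y_1}\right)_{|G_2=y_2}$.

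Next I would verify the three conditions of \Cref{def:disintegration} for the candidate $\nu_{(y_1, y_2)} := \left(\mu_{|G_1=y_1}\right)_{|G_2=y_2}$ regarded as a disintegration of $\mu$ with respect to $G$. Joint measurability of $(y_1, y_2) \mapsto \nu_{(y_1, y_2)}(A)$ would be obtained by chaining the separate measurability guarantees provided by each application of \Cref{th:tarieladze_disintegration}. Concentration on $G^{-1}(\{(y_1, y_2)\}) = G_1^{-1}(\{y_1\}) \cap G_2^{-1}(\{y_2\})$ follows because $\mu_{|G_1=y_1}$ is already concentrated on the first fiber and further conditioning on $G_2 = y_2$ preserves this property while imposing the $G_2$-fiber restriction. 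Finally, iterating the two mixture formulae yields
\begin{align*}
    \mu(A) = \int_{Y_1} \int_{Y_2} \nu_{(y_1, y_2)}(A) \, d\left(\mu_{|G_1=y_1} \circ G_2^{-1}\right)(y_2) \, d\left(\mu \circ G_1^{-1}\right)(y_1),
\end{align*}
and I would identify the outer iterated measure as $\mu \circ G^{-1}$ by a Fubini-type argument: for any Borel $B \subset Y_1 \bigoplus Y_2$, applying the mixture property of the $G_1$-disintegration to the set $G^{-1}(B)$ and using concentration of $\mu_{|G_1=y_1}$ on $\{G_1 = y_1\}$ yields the desired identification.

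Uniqueness of the disintegration, which is standard for measurable maps into a separable Banach space (and $Y_1 \bigoplus Y_2$ is such), then implies $\nu_{(y_1, y_2)} = \mu_{|G=(y_1, y_2)}$ for $\mu \circ G^{-1}$-almost every $(y_1, y_2)$. I expect the main technical obstacle to be the joint measurability of the iterated disintegration together with the Fubini identification of the iterated pushforward with $\mu \circ G^{-1}$; both are routine in principle given the separability and Borel structure of the spaces involved, but they are the steps that demand the most careful bookkeeping.
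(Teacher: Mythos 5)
Your proposal shares the paper's overall skeleton---construct the iterated measure $\left(\mu_{|G_1=y_1}\right)_{|G_2=y_2}$ by two applications of \Cref{th:tarieladze_disintegration}, verify the three conditions of \Cref{def:disintegration} for the bundled operator $G=(G_1,G_2)$, and conclude by uniqueness of disintegrations---but the two verifications that carry the real weight are done by genuinely different arguments. For the mixture property, the paper invokes the characteristic-functional criterion \citep[Proposition 3.2]{TARIELADZE2007851} and performs an explicit Gaussian computation with representing sequences and a change of variables; you instead identify $\mu\circ G^{-1}$ with the iterated pushforward $\int_{Y_1}\left(\mu_{|G_1=y_1}\circ G_2^{-1}\right)\left(\boldsymbol{\cdot}\right)d\left(\mu\circ G_1^{-1}\right)(y_1)$ via sections and a Fubini-type argument. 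Your route is more elementary and not actually Gaussian-specific (it would prove transitivity for any measures admitting disintegrations), whereas the paper's computation exploits Gaussianity but yields the explicit conditional moments that are reused later (\Cref{th:update_gp}, \Cref{sec:explicit}). Two spots in your sketch need more care than you allot them. First, the claim that further conditioning on $G_2$ ``preserves'' concentration on the $G_1$-fiber is not automatic: the iterated measure is singular with respect to $\mu_{|G_1=y_1}$ (it lives on a null fiber), so it cannot simply inherit null sets; the correct derivation integrates the indicator of $G_1^{-1}\left(\left\lbrace y_1\right\rbrace\right)$ against the second mixture formula, which gives concentration only for almost every $y_2$ and then requires bookkeeping to exhibit a single measurable full-measure set $Y_0$. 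The paper avoids this by computing that the pushforward covariance $G\tilde{C}_{\mu}^{(1,2)}G^*$ vanishes \citep[using Lemma 3.4 (c) of][]{TARIELADZE2007851}, which gives concentration at every point of an explicit Borel affine subspace of full measure. Second, joint measurability of $(y_1,y_2)\mapsto\left(\mu_{|G_1=y_1}\right)_{|G_2=y_2}(A)$ does not follow by merely ``chaining'' the two separate guarantees, since the second guarantee is measurability in $y_2$ for each fixed $y_1$; it follows from the explicit Gaussian form of the iterated disintegration (mean element jointly affine continuous in $(y_1,y_2)$, covariance operator constant), which is essentially what the paper's proof uses. Both points are repairable, but they are exactly where your argument must fall back on the explicit formulas.
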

This theorem can be viewed as a measure-theoretic counterpart to the update formulae for GPs. 
Since both disintegrating measures are equal, it follows that their moments are equal too, 
we can thus characterize sequential disintegration in terms of mean element and covariance 
operator.
Indeed, for the special case of GPs with trajectories in the Banach space of continous functions 
on a compact domain with finite-dimensional data, we can provide explicit update formulate, 
this yields, using \Cref{th:link_tarantola}:

\begin{corollary}\label{th:update_gp}
Let $Z$ be a Gaussian process on a compact metric space $D$ with continuous trajectories. Consider two observation operators $G_1:C(D)\rightarrow \mathbb{R}^{\datdim_1},~\left(G_1 Z\right)_i = \int_D Z_s d\lambda_i^{(1)}$ and $G_2:C(D)\rightarrow \mathbb{R}^{\datdim_2},~\left(G_2 Z\right)_i = \int_D Z_s d\lambda_i^{(2)}$. Denote by $m_{\boldsymbol{\cdot}}$ and $k\left(\boldsymbol{\cdot}, \boldsymbol{\cdot}\right)$ the mean and covariance function of $Z$. Then, for any $y = \left(y_1, y_2\right) \in \mathbb{R}^{\datdim_1 + \datdim_2}$ and any $s, s_1, s_2 \in D$, we have:
\begin{align*}
    \tilde{m}_s(y)
    &=
    m_{s} + 
    K_{sG_1}K_{G_1G_1}^{-1}\left(y_1 - G_1m_{\boldsymbol{\cdot}}\right)
    + K_{sG_2}\left(\tilde{K}^{(1)}_{G_2G_2}\right)^{-1}\left(y_2 - G_2\tilde{m}^{(1)}_{\boldsymbol{\cdot}}\right),\\
    \tilde{k}(s_1, s_2)
    &=
    k(s_1, s_2) 
    - K_{s_1 G_1}K_{G_1G_1}^{-1}K_{s_2 G_1}^T
    - \tilde{K}^{(1)}_{s_1 G_2}\left(\tilde{K}^{(1)}_{G_2G_2}\right)^{-1}\left(\tilde{K}^{(1)}_{s_2 G_2}\right)^T,
\end{align*}
where $G:=\left(G_1, G_2\right)$ and $\tilde{m}^{(1)}$ denotes the conditional mean 
of $Z$ given $G_1Z=y_1$ as given by \Cref{th:link_tarantola}. Also 
$\tilde{K}^{(1)}_{G_2G_2}$ and $\tilde{K}^{(1)}_{sG_2}$ denote the same matrices as in \Cref{eq:forward_pt_cov_matrix,eq:forward_cov_matrix} with the prior covariance $k\left(\boldsymbol{\cdot}, \boldsymbol{\cdot}\right)$ replaced by the conditional covariance of $Z$ given $G_1Z$.
\end{corollary}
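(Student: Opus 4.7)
My plan is to combine the transitivity theorem \Cref{th:transitivity_disintegration} with two successive applications of the single-stage formula \Cref{th:link_tarantola}. By transitivity, for $\mu\circ (G_1,G_2)^{-1}$-almost every $(y_1,y_2)$ one has
\[
\mu_{|(G_1,G_2)=(y_1,y_2)} = (\mu_{|G_1=y_1})_{|G_2=y_2}.
\]
The rank hypotheses required by \Cref{th:transitivity_disintegration} are automatic, since both $C_{\nu_1}$ and $C_{\nu_2}$ have finite rank owing to the finite dimensionality of the codomains $\mathbb{R}^{\datdim_1}$ and $\mathbb{R}^{\datdim_2}$. It therefore suffices to compute the iterated disintegration on the right-hand side explicitly and identify its mean and covariance with those stated in the corollary.

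First I would apply \Cref{th:link_tarantola} directly to the prior $\mu$ and the operator $G_1$. Since $G_1$ is bounded on $C(D)$ and representable as integration against the Radon measures $\lambda^{(1)}_i$, the corollary produces the intermediate Gaussian posterior $\mu_{|G_1=y_1}$ with mean function $\tilde{m}^{(1)}_{\boldsymbol{\cdot}}(y_1)$ and covariance kernel $\tilde{k}^{(1)}(\boldsymbol{\cdot},\boldsymbol{\cdot})$ given by the closed-form expressions in that corollary. These account exactly for the $K_{sG_1}K_{G_1G_1}^{-1}(y_1-G_1m_{\boldsymbol{\cdot}})$ block of the claimed mean formula and for the $K_{s_1G_1}K_{G_1G_1}^{-1}K_{s_2G_1}^{T}$ block of the claimed covariance formula.

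Next I would apply \Cref{th:link_tarantola} a second time, now treating $\mu_{|G_1=y_1}$ as the new Gaussian prior on $C(D)$ and $G_2$ as the observation operator. The operator $G_2$ is again bounded on $C(D)$ and represented by the Radon measures $\lambda^{(2)}_i$, so the corollary applies verbatim with the prior mean and covariance replaced everywhere by $\tilde{m}^{(1)}$ and $\tilde{k}^{(1)}$. Recognising $\tilde{K}^{(1)}_{sG_2}$ and $\tilde{K}^{(1)}_{G_2G_2}$ as precisely the matrices of \Cref{eq:forward_pt_cov_matrix,eq:forward_cov_matrix} built from $\tilde{k}^{(1)}$ rather than $k$, the resulting expressions match the claimed update formulas term by term.

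The main obstacle is justifying this second application rigorously: one must check that $\mu_{|G_1=y_1}$ is still a Gaussian Borel measure on the same separable Banach space $C(D)$ so that \Cref{th:link_tarantola} can legitimately be invoked again. This is supplied by \Cref{th:tarieladze_disintegration}, which asserts that the disintegrating measures are themselves Gaussian on $X=C(D)$ with explicit mean element and covariance operator, so the hypotheses of \Cref{th:disintegration_gp} (hence of \Cref{th:link_tarantola}) transfer to the second stage. The finite-rank condition for $G_2$ relative to $\mu_{|G_1=y_1}$ is trivial because $Y_2=\mathbb{R}^{\datdim_2}$ is finite dimensional, and transitivity then closes the argument.
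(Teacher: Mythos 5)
Your proposal is correct and follows essentially the same route as the paper: invoke \Cref{th:transitivity_disintegration} to identify the joint disintegration with the iterated one, then apply \Cref{th:link_tarantola} twice, treating the (still Gaussian, by \Cref{th:tarieladze_disintegration}) intermediate posterior $\mu_{|G_1=y_1}$ as the prior for the second stage. Your explicit check that the second application is legitimate, and that the finite-rank hypotheses hold automatically for finite-dimensional codomains, only makes explicit what the paper leaves implicit.
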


\textbf{Infinite Rank Data:}
For the sake of completeness, we also consider sequential conditioning 
in the presence of 'infinite rank data'. That is, we want to adapt 
\Cref{th:tarieladze_disintegration} and its corrolaries, as well as 
\Cref{th:transitivity_disintegration} to the case where 
$C_{\nu} := GC_{\mu}G^*:Y^*\rightarrow Y$ does not have finite rank. 
Thanks to \cite[Lemma 3.5]{TARIELADZE2007851} we are still able to find 
a $C_{\nu}$-representing sequence and \cite[Lemma 3.4]{TARIELADZE2007851} 
guarantees the convergence of the series defining the covariance operator. 
The main difference compared to the finite rank case is that we can only 
define the disintegration on a full measure subspace of the data:

\begin{theorem}\label{th:tarieladze_disintegration_infinite}
    Let $X$, $Y$, $\mu$, $G$, $\nu$ and $C_{\nu}$ 
    be as in \Cref{th:tarieladze_disintegration} and 
    assume that $C_{\nu}$ has infinite rank. 
    Then there exists a subspace $Y_0$ of $Y$ with $\nu(Y_0)=1$ and 
    a disintegration $\left(\mu_{|G=y}\right)_{y\in Y_0}$ of $\mu$ with 
    respect to $G$ such that for each $y \in Y_0$ the measure $\mu_{|G=y}$ is 
    Gaussian with mean element and covariance operator:
    \begin{align}
        \tilde{m}_{\mu}(y) &= m_{\mu} + \sum_{i=1}^{\infty} \left\langle y - Gm_{\mu}, y_i^* \right\rangle C_{\mu} G^* y_i^*\label{eq:post_mean_disintegration_infinite}\\
        \tilde{C}_{\mu} &= C_{\mu} - \sum_{i=1}^{\infty} \left\langle C_{\mu}G^* y_i^*, \boldsymbol{\cdot} \right\rangle C_{\mu} G^* y_i^*\label{eq:post_cov_disintegration_infinite},
    \end{align}
    where $\left(y_i^*\right)_{i\in\mathbb{N}}$ is any $C_{\nu}$-representing sequence.
    Furthermore, the map $\tilde{m}_{\mu}:Y_0 \rightarrow X$ is continuous and affine 
    and the mean element satisfies $G\tilde{m}_{\mu}(y) =y$ for all $y\in Y_0:=Gm_{\mu}+C_{\nu}(Y^*)$.
\end{theorem}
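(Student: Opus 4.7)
The plan is to mirror the proof strategy of \Cref{th:tarieladze_disintegration}, reducing first to the centred case by the affine translation $f\mapsto f-m_{\mu}$, and then pushing through the same construction but replacing finite sums by series whose convergence now has to be carefully justified. So I would assume without loss of generality that $m_\mu=0$ (the non-centred case being recovered at the end by writing $\tilde{m}_\mu(y)=m_\mu + \tilde{m}_{\mu_0}(y-Gm_\mu)$, with $\mu_0$ the centred translate of $\mu$). Then I would invoke \cite[Lemma 3.5]{TARIELADZE2007851} to obtain a $C_\nu$-representing sequence $(y_i^*)_{i\in\mathbb{N}}\subset Y^*$, and use this sequence to define the candidate covariance operator and mean map via the series in \eqref{eq:post_cov_disintegration_infinite} and \eqref{eq:post_mean_disintegration_infinite}.

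Next I would set up the convergence argument. For the covariance series, \cite[Lemma 3.4]{TARIELADZE2007851} gives
\[
\sum_{i=1}^{\infty} \langle C_\mu G^* y_i^*, x^*\rangle^2 = \langle C_\mu G^* y^*_?, x^*\rangle\text{-type bound},
\]
so the partial sums in \eqref{eq:post_cov_disintegration_infinite} form a Cauchy sequence in the appropriate operator topology, yielding a bounded, symmetric, positive operator $\tilde{C}_\mu:X^*\rightarrow X$ independent of $y$. For the mean series, I would observe that the coefficients $\langle y-Gm_\mu,y_i^*\rangle$ are jointly Gaussian under $\nu=\mu\circ G^{-1}$ with variances summing to $\langle C_\nu y^*, y^*\rangle$-controlled quantities, so that by a Kolmogorov two-series or Itô--Nisio type argument the sum converges $\nu$-almost surely. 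Defining
\[
Y_0 := Gm_\mu + C_\nu(Y^*) \cap \bigl\{y\in Y : \textstyle\sum_{i=1}^{\infty}\langle y-Gm_\mu,y_i^*\rangle\, C_\mu G^* y_i^* \text{ converges in } X\bigr\},
\]
we then have $\nu(Y_0)=1$, and on this set $\tilde{m}_\mu$ is well-defined, affine, and continuous (the continuity following from uniform boundedness of the partial-sum operators by Banach--Steinhaus applied on $Y_0$).

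To conclude, I would verify the three properties of \Cref{def:disintegration}. Measurability of $y\mapsto\mu_{|G=y}(A)$ reduces to showing measurability of the finite-truncation versions and taking limits. Concentration on the fibre $G^{-1}(\{y\})$ follows from the identity $G\tilde{m}_\mu(y)=y$ on $Y_0$ (a direct computation using $\langle C_\nu y_i^*, y_j^*\rangle=\delta_{ij}$) combined with $GC_\mu G^*= C_\nu$, which forces $G_\#\mu_{|G=y}=\delta_y$. The mixture identity follows from characteristic-function matching: using Gaussianity, one checks
\[
\int_Y \hat{\mu}_{|G=y}(x^*)\, d\nu(y) = \hat{\mu}(x^*) \qquad \forall x^*\in X^*,
\]
which reduces, via the joint Gaussianity of $(\langle\cdot,x^*\rangle, \langle G\cdot,y_i^*\rangle)$ and the representing-sequence identity $\sum_i\langle C_\mu G^*y_i^*,x^*\rangle^2 = \langle GC_\mu G^* \cdot, \cdot\rangle$-decomposition of $\mathrm{Var}(\langle \cdot, x^*\rangle)$, to an $\ell^2$ orthogonal decomposition of the prior variance into the part explained by the data and the residual.

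The main obstacle, in my view, is not any single step but making the interaction of the three convergence statements rigorous and uniform enough to deduce a genuine disintegration rather than a mere family of conditional laws defined $\nu$-a.e.\ separately for each event $A$. Concretely, the delicate point is upgrading the $\nu$-almost sure convergence of the scalar series $\sum_i\langle y-Gm_\mu,y_i^*\rangle C_\mu G^* y_i^*$ to convergence in the Banach space $X$ on a single set $Y_0$ of full measure, independent of $A\in\mathcal{B}(X)$; this is what ultimately forces the restriction of the disintegration to $Y_0$ rather than to all of $Y$, and is the essential difference from the finite-rank case of \Cref{th:tarieladze_disintegration}. Once this is in place, the remaining verifications are formally identical to the finite-rank proof, with finite sums replaced by convergent series.
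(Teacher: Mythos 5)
Your overall plan tracks the paper's proof closely: reduce to the change in the mean element, take a countable $C_{\nu}$-representing sequence via \cite[Lemma 3.5]{TARIELADZE2007851}, prove $\nu$-almost sure convergence of the mean series by an It\^o--Nisio argument, control the covariance series via \cite[Lemma 3.4]{TARIELADZE2007851}, and verify the disintegration properties by characteristic-function matching. However, there is a genuine gap in your definition of the full-measure set. You set $Y_0 := Gm_{\mu} + C_{\nu}(Y^*) \cap \lbrace y : \text{the mean series converges in } X\rbrace$ and claim $\nu(Y_0)=1$. In the infinite-rank case this is false: $C_{\nu}(Y^*)$ is a (dense subset of the) Cameron--Martin space of $\nu$, and by the Cameron--Martin theorem an infinite-dimensional Gaussian measure assigns measure \emph{zero} to its Cameron--Martin space. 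Since your $Y_0$ is contained in the translate $Gm_{\mu}+C_{\nu}(Y^*)$, it is $\nu$-null --- the opposite of what the theorem requires, and the mixture property of \Cref{def:disintegration} cannot hold for a family defined only on a null set. The finite-rank case is misleading here: when $C_{\nu}$ has finite rank, $\nu$ is concentrated on $Gm_{\mu}+C_{\nu}(Y^*)$, which is why \Cref{th:tarieladze_disintegration} can use that set (and why the final sentence of the present theorem statement, copied from the finite-rank case, is itself loose); this concentration is exactly what fails when the rank is infinite.

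The same issue undermines your verification of fiber concentration: the ``direct computation using $\langle C_{\nu}y_i^*, y_j^*\rangle=\delta_{ij}$'' showing $G\tilde{m}_{\mu}(y)=y$ is only valid when $y-Gm_{\mu}$ lies in the range of $C_{\nu}$, where \cite[Lemma 3.4]{TARIELADZE2007851} gives exact reconstruction $\sum_i \langle C_{\nu}y^*, y_i^*\rangle C_{\nu}y_i^* = C_{\nu}y^*$. The paper's proof resolves this by introducing a second set, $Y_3:=\lbrace y\in Y : \lim_{n\to\infty}\Vert y - Gm_{\mu} - \sum_{i=1}^{n}\langle y-Gm_{\mu}, y_i^*\rangle C_{\nu}y_i^*\Vert = 0\rbrace$, which strictly contains the translated Cameron--Martin space, and proving as a separate claim that $\nu(Y_3)=1$: the reconstruction identity holds exactly on $Gm_{\mu}+C_{\nu}(Y^*)$, and is then extended to a set of full measure using the fact that the closure of the Cameron--Martin space in $Y$ carries full $\nu$-measure. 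The disintegration is then defined on $Y_0:=Y_2\cap Y_3$, where $Y_2$ is your convergence set (whose full measure you do establish correctly, by the same It\^o--Nisio argument as the paper). So the missing idea is precisely this extension step: upgrading the fiber identity $G\tilde{m}_{\mu}(y)=y$ from the null Cameron--Martin space to a $\nu$-full set. Without it, your ``remaining verifications are formally identical to the finite-rank proof'' conclusion does not go through.
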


Concerning the transitivity of disintegrations in the infinite rank 
data setting, one sees that \Cref{th:transitivity_disintegration} 
holds with only slight modifications. 
Indeed, the only necessary 
adaptation is that one should restrict the joint disintegration to the 
direct sum of the subspaces where the individual disintegrations are defined, but 
since those are of full measure, the conclusion of the theorem still holds.

\begin{theorem}\label{th:seq_cond_inf}
     Let $X, Y_1,Y_2$ be real separable Banach spaces, $\mu$ be a Gaussian measure on
     $\mathcal{B}\left(X\right)$ with mean element $m_{\mu}$ and covariance
     operator $C_{\mu}:X^*\rightarrow X$. Also let $G_1:X\rightarrow
     Y_1$
     and $G_2:X\rightarrow Y_2$ be bounded linear operators. 
     Then there exists a subspace $Y_0:=(Y_0^{(1)}, Y_0^{(2)})\subset Y$ such 
     that $\nu(Y_0)=1$ and for all $(y_1, y_2) \in (Y_1^{(0)}, Y_2^{(0)})$ we have:
     \[  
         \mu_{|\left(G_1, G_2\right) = \left(y_1, y_2\right)}
         = \left(\mu_{|G_1=y_1}\right)_{|G_2=y_2}.
     \]
 \end{theorem}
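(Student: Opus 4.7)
The plan is to mirror the proof of \Cref{th:transitivity_disintegration} while replacing each invocation of the finite-rank result \Cref{th:tarieladze_disintegration} by its infinite-rank counterpart \Cref{th:tarieladze_disintegration_infinite}, carefully tracking the full-measure subspaces on which the resulting disintegrations are defined.

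First I would apply \Cref{th:tarieladze_disintegration_infinite} to the operator $G_1$, obtaining a subspace $Y_0^{(1)}\subset Y_1$ with $(\mu\circ G_1^{-1})(Y_0^{(1)})=1$ together with a family $(\mu_{|G_1=y_1})_{y_1\in Y_0^{(1)}}$ of Gaussian disintegrating measures whose covariance operator $\tilde C_\mu^{(1)}$ does not depend on $y_1$ (it is given by \Cref{eq:post_cov_disintegration_infinite}, while only the mean element varies with $y_1$). This $y_1$-independence is the key structural feature I would then exploit: applying \Cref{th:tarieladze_disintegration_infinite} in turn to $G_2$ and to each $\mu_{|G_1=y_1}$ produces a full-measure subspace $Y_0^{(2)}\subset Y_2$ determined solely by the covariance operator $G_2\tilde C_\mu^{(1)}G_2^*$ and its $C_\nu$-representing sequence, so $Y_0^{(2)}$ may be chosen independently of $y_1$. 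On this subspace the iterated disintegration $(\mu_{|G_1=y_1})_{|G_2=y_2}$ is a well-defined Gaussian measure.

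In parallel, I would apply \Cref{th:tarieladze_disintegration_infinite} to the bundled operator $(G_1,G_2):X\rightarrow Y_1\oplus Y_2$, producing its own full-measure subspace $\widetilde Y_0\subset Y_1\oplus Y_2$ on which the joint disintegration $\mu_{|(G_1,G_2)=(y_1,y_2)}$ is defined. A Fubini-type argument applied to the mixture representation in \Cref{def:disintegration}(iii) gives that $Y_0^{(1)}\oplus Y_0^{(2)}$ has full $\mu\circ(G_1,G_2)^{-1}$-measure, so one may replace $\widetilde Y_0$ by $Y_0:=\widetilde Y_0\cap(Y_0^{(1)}\oplus Y_0^{(2)})$, which is still of full measure and of the form stated in the theorem. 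On this common subspace, both the iterated family $(\mu_{|G_1=y_1})_{|G_2=y_2}$ and the joint family $\mu_{|(G_1,G_2)=(y_1,y_2)}$ are Gaussian, both are concentrated on the correct fibres $(G_1,G_2)^{-1}(\{(y_1,y_2)\})$, and both mixtures reproduce $\mu$ (for the iterated family this is the tower property, obtained by applying \Cref{def:disintegration}(iii) twice and invoking Fubini). Essential uniqueness of regular conditional probabilities on standard Borel spaces (separable Banach spaces being Polish) then forces the claimed pointwise equality.

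\textbf{Main obstacle.} The principal delicate point is the joint-measurability bookkeeping that allows $Y_0^{(2)}$ to be chosen uniformly in $y_1$ in the first step; this is precisely where the $y_1$-independence of $\tilde C_\mu^{(1)}$ becomes indispensable, since without it one would have to invoke a measurable selection argument to construct a jointly measurable family $y_1\mapsto Y_0^{(2)}(y_1)$. Once that subtlety is disposed of, the remainder of the argument is a faithful transcription of the proof of \Cref{th:transitivity_disintegration} with the appropriate substitutions.
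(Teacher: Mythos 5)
Your overall strategy coincides with the paper's: the paper justifies this theorem exactly by the remark preceding it, namely that the proof of \Cref{th:transitivity_disintegration} goes through once \Cref{th:tarieladze_disintegration} is replaced by \Cref{th:tarieladze_disintegration_infinite} and everything is restricted to the full-measure subspaces on which the infinite-rank disintegrations exist. Your substitution of a Fubini/tower-property argument for the paper's characteristic-functional computation in the mixture step, and of essential uniqueness of regular conditional probabilities on Polish spaces for the uniqueness of disintegrations cited from \citet{TARIELADZE2007851}, are immaterial variants of that same plan.

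There is, however, a genuine gap at the step you yourself single out as the crux. You claim that the full-measure subspace $Y_0^{(2)}$ obtained by applying \Cref{th:tarieladze_disintegration_infinite} to $\mu_{|G_1=y_1}$ and $G_2$ is ``determined solely by the covariance operator $G_2\tilde{C}_{\mu}^{(1)}G_2^*$ and its representing sequence,'' hence may be chosen independently of $y_1$. That is not what the theorem provides. The subspace it constructs depends on the mean of the measure being disintegrated as well as on its covariance: in the paper's proof it is $Y_2\cap Y_3$, both sets being defined through the increments $y-Gm_{\mu}$, and in the statement it is $Gm_{\mu}+C_{\nu}(Y^*)$. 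Applied at the second stage this gives $Y_0^{(2)}(y_1)=G_2\tilde{m}_{\mu}^{(1)}(y_1)+W$, where $W$ is covariance-determined but the translation $G_2\tilde{m}_{\mu}^{(1)}(y_1)$ varies affinely with $y_1$. In infinite dimensions, where $W$ can be a proper (even $\nu_2$-null, up to closure) subspace, there is no reason for $G_2\bigl(\tilde{m}_{\mu}^{(1)}(y_1)-\tilde{m}_{\mu}^{(1)}(y_1')\bigr)$ to lie in $W$, so these affine sets need not share any common full-measure part; they can even be pairwise disjoint when $\tilde{C}_{\mu}^{(1)}$ is degenerate in a direction along which the conditional mean moves. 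Consequently your Fubini step, which requires $\nu_2^{y_1}(Y_0^{(2)})=1$ for almost every $y_1\in Y_0^{(1)}$, is unsupported as written, and the measurable-selection issue you declare avoided remains open. To be fair, the paper glosses over exactly the same point---its product-form set $Y_0=(Y_0^{(1)},Y_0^{(2)})$ is asserted rather than constructed---and the difficulty disappears if one settles for the almost-everywhere equality stated in \Cref{th:transitivity_disintegration}; but the covariance-only argument you present does not close the hole it was designed to close.
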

 This theorem provides a rigorous basis for Gaussian process update 
 in the case of infinite rank data. We stress that assimilation of such data can be theoretically challenging when using the standard Gaussian process framework, which relies on linear combinations of pointwise field evaluations to define conditional laws. We believe the above showcases the convenience of the measure-disintegration framework and how it can handle such type of data more naturally.
 We hope this can serve as a basis for further contributions.\\

 As a final byproduct, one can write update formulae for sequential conditioning 
 (disintegration) of Gaussian measures in terms of their moments. Denoting by 
 $m_{\mu}^{(1)}(y_1)$ and $C_{\mu}^{(1)}$ the mean element and covariance operator 
 of the disintegrating measure $\mu_{|G_1=y_1}$ and by 
 $m_{\mu}^{(1\oplus 2)}(y_1, y_2)$, 
 respectively $C_{\mu}^{(1\oplus 2)}$ those of the disintegration measure 
 $\mu_{|(G_1, G_2)=(y_1, y_2)}$ one obtains the following corollary.

 \begin{corollary}\label{th:ultimate_update}
     Consider the same setting as \Cref{th:seq_cond_inf} and let 
     $(y_i^{*(2)})_{i=1,...,p_{2}}$ be any
    $G_2 C^{(1)}_{\mu} G_2^*$-representing sequence.
     Then the mean element and covariance operator of the 
     disintegrating measure $\mu_{|(G_1, G_2)=(y_1, y_2)}$ can be written 
     in terms of the moments of the intermediate disintegrating measure 
     $\mu_{|G_1=y_1}$ as:
     \begin{align*}
         m_{\mu}^{(1\oplus 2)}\left(y_1, y_2\right) &= 
         m_{\mu}^{(1)}\left(y_1\right)
         +
         \sum_{i=1}^{\infty}
         \left\langle
             y_2 - G_2 m_{\mu}^{(1)}\left(y_1\right), y_i^{(2)*}
        \right\rangle
        C_{\mu}^{(1)}G_2^*y_i^{(2)*}\\
         C_{\mu}^{(1\oplus 2)} &= C_{\mu}^{(1)} - \sum_{i=1}^{\infty}
         \left\langle
             C_{\mu}^{(1)}G_2^*y_i^{(2)*}, \cdot
        \right\rangle
        C_{\mu}^{(1)}G_2^*y_i^{(2)*},
     \end{align*}
     where the equalities hold for almost all $(y_1, y_2) \in Y_1 \bigoplus Y_2$ 
     with respect to $\mu \circ (G_1, G_2)^{-1}$.
 \end{corollary}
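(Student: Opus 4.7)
The plan is to obtain the claimed update formulae as an essentially immediate consequence of two results already established in the paper: the transitivity of sequential disintegrations (\Cref{th:seq_cond_inf}) and the explicit form of a Gaussian disintegration in terms of a representing sequence (\Cref{th:tarieladze_disintegration_infinite}). The strategy is first to re-express the joint disintegration as an iterated one, and then to invoke the general formulae on the inner disintegration, which happens to have $\mu_{|G_1=y_1}$ as its input measure and $G_2$ as its observation operator.

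First, I would invoke \Cref{th:seq_cond_inf} to obtain, for almost all $(y_1, y_2) \in Y_1 \bigoplus Y_2$ with respect to the pushforward $\mu \circ (G_1, G_2)^{-1}$,
\[
    \mu_{|(G_1, G_2)=(y_1, y_2)} = \bigl(\mu_{|G_1=y_1}\bigr)_{|G_2=y_2}.
\]
In particular, both sides have the same mean element and covariance operator, so by definition $m_{\mu}^{(1\oplus 2)}(y_1, y_2)$ and $C_{\mu}^{(1\oplus 2)}$ coincide with the mean element and covariance operator of the disintegration of $\mu_{|G_1=y_1}$ by $G_2$. This reduces the problem to a single disintegration step.

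Second, by \Cref{th:tarieladze_disintegration_infinite} applied to $\mu$ and $G_1$, the measure $\mu_{|G_1=y_1}$ is itself a Gaussian measure on $X$ with mean element $m_{\mu}^{(1)}(y_1)$ and covariance operator $C_{\mu}^{(1)}$. One can therefore apply \Cref{th:tarieladze_disintegration_infinite} a second time, now with input Gaussian measure $\mu_{|G_1=y_1}$ and bounded linear operator $G_2:X\rightarrow Y_2$; the relevant covariance on the data side is $C_{\nu_2}^{(1)} := G_2 C_{\mu}^{(1)} G_2^*$, and by hypothesis $(y_i^{*(2)})$ is a $C_{\nu_2}^{(1)}$-representing sequence. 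Plugging $m_{\mu}^{(1)}(y_1)$, $C_{\mu}^{(1)}$, $G_2$, and the sequence $(y_i^{*(2)})$ into the mean and covariance formulae \eqref{eq:post_mean_disintegration_infinite} and \eqref{eq:post_cov_disintegration_infinite} yields exactly the two expressions stated in the corollary.

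The only point that requires a bit of care is compatibility of the almost-sure qualifiers: the disintegration of $\mu_{|G_1=y_1}$ under $G_2$ is defined on a full-measure subspace of $Y_2$ that may a priori depend on $y_1$, and the disintegration of $\mu$ under $G_1$ is itself defined only on a full-measure subspace of $Y_1$. I expect this to be the main, and only nontrivial, obstacle. However, \Cref{th:seq_cond_inf} has been stated precisely so as to absorb these exceptional sets into a single product subspace $Y_0 = (Y_0^{(1)}, Y_0^{(2)})$ of full $\mu \circ (G_1, G_2)^{-1}$-measure on which the iterated and joint disintegrations agree. Once this is in hand, all remaining steps are substitutions, and the corollary follows with the ``almost all $(y_1, y_2)$'' qualifier exactly as stated.
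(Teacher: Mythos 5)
Your proposal is correct and follows essentially the same route as the paper, which presents this corollary as an immediate byproduct of the transitivity result \Cref{th:seq_cond_inf} combined with applying the single-step disintegration formulae of \Cref{th:tarieladze_disintegration_infinite} to the intermediate Gaussian measure $\mu_{|G_1=y_1}$ under the operator $G_2$ (this is also exactly how the iterated disintegration is constructed in the appendix proof of \Cref{th:transitivity_disintegration}). Your closing remark about the exceptional sets being absorbed into the full-measure subspace provided by \Cref{th:seq_cond_inf} is precisely the point the paper relies on as well.
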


 Note that this corollary provides an extension to Gaussian measures and operator observations 
 of the well-known kriging update formulae \citep{update_chevalier} and can be viewed as 
 subsuming various gaussian conditioning update formulae under a rigorous and abstract theoretical framework.

\begin{example*}[continued]
    We now come back to the example from the introduction to demonstrate the machinery developed in the two preceding sections.
    Assume that we want to add derivative observation at $x=0$.\\
    First, in order to apply the disintegration theorems, we need to make sure 
    that the observation operator under consideration is a bounded operator on a Banach space in which the path of the prior 
    lie with probability one. In this example, the prior that was used was a Mat\'ern $5/2$ GP with 
    lengthscale parameter $\lambda=0.4$. According to \Cref{th:matern_path}, the path of the prior almost surely lie in the 
    Sobolev space $H_2([-1, 1])$, so taking $X=H_2([-1, 1])$ ensures that the observation operators are bounded (integral 
    and Fourier observations are bounded since the domain is compact and the paths continuous).

\begin{figure}[h!]
\centering
\subfloat[pointwise + integral + Fourier + derivative]{\includegraphics[scale=0.58]{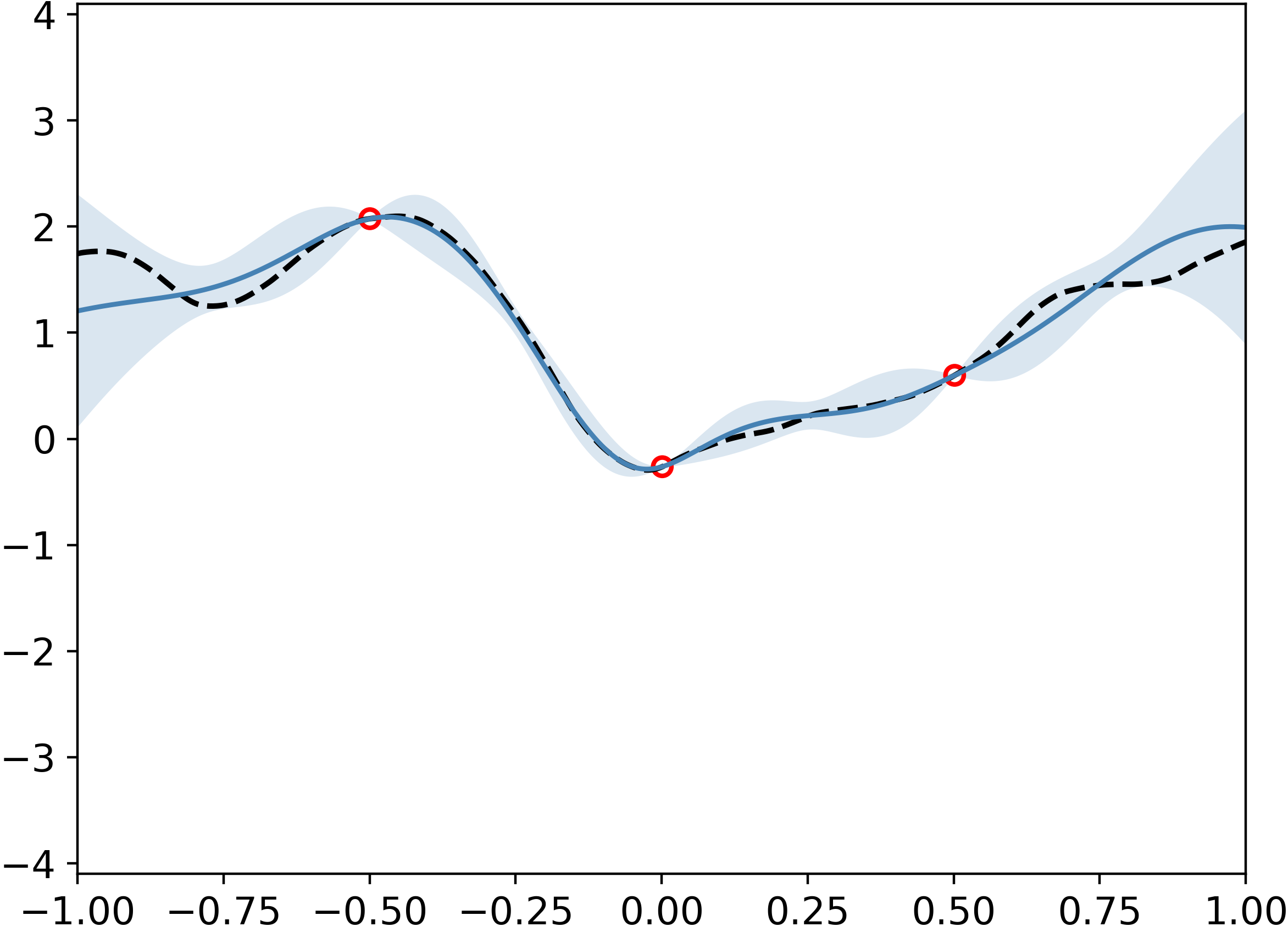}}
\caption{Continuation of the introductory example with addition of derivative observation at $x=0$.
}
\label{fig:fourier_variance_reduction_deriv}
\end{figure}
\end{example*}

\noindent Now, $H_2([-1, 1])$ is a RKHS and thus by \Cref{th:gaussianity_induced_measure_hilbert} the Gaussian measure - Gaussian process correspondence 
is applicable. Furthermore, the 7 observations (3 pointwise + 1 integral + 2 Fourier + 1 derivative) considered can be described 
by a bounded operator between separable Banach spaces $G: H_2([-1, 1]) \rightarrow \mathbb{R}^7$, so that the 
disintegration framework from \Cref{sec:disintegration} can be used. 
Finally, using the updated formulae (\Cref{th:ultimate_update}) one can express 
the posterior mean and covariance after inclusion of the derivative observation as an update of the one after assimilation of the previous observations:

\begin{align*}
    \tilde{m}^{(7)}_{x_0}(y_7) &= \tilde{m}_{x_0}^{(6)}(y_1, ...,y_6)\\
                               &\phantom{--}+
        \frac{d}{dx} \tilde{k}^{(6)}(x_0, x)\vert_{x=0}
        \left(
            \frac{d}{dx}\frac{d}{dx'}\tilde{k}^{(6)}(x, x')\vert_{x,x'=0}
        \right)^{-1}
        (y_7 - \frac{d}{dx}\tilde{m}_x^{(6)}(y_1, ...,y_6)\vert_{x=0})
        \\
        \tilde{k}^{(7)}(x_1, x_2) &=
        \tilde{k}^{(6)}(x_1, x_2)
        -
        \frac{d}{dx} \tilde{k}^{(6)}(x_1, x)\vert_{x=0}
        \left(
            \frac{d}{dx}\frac{d}{dx'}\tilde{k}^{(6)}(x, x')\vert_{x,x'=0}
        \right)^{-1}
        \frac{d}{dx} \tilde{k}^{(6)}(x, x_2)\vert_{x=0}
\end{align*}
where $\tilde{m}_{x_0}^{(6)}(y_1, ...,y_6)$ and $\tilde{k}^{(6)}(x_1, x_2)$ 
denote the mean and covariance function after inclusion of the first 6 observations. 
Note that the correspondence between the mean element and covariance operator of the induced 
measure and the mean and covariance function of the process (\Cref{th:corresp_mean_cov}) 
can be used since the Dirac delta functionals belong to the dual of $H_2([-1, 1])$. 
This example demonstrates how the Gaussian measure framework can be used to provide 
a thorough theoretical grounding to previously known techniques 
\citep{solak2003derivative,ribau2018,agrell_jmlr_2019}.

\section{Conclusion} 
By bridging recent results about GP sample path properties with the framework of Gaussian measures, we provide a formulation of sequential data assimilation of linear operator data under Gaussian models in the language of disintegrations of measures. We show equivalence of the Gaussian process and Gaussian measure approaches 
and generalize the GP update formulae to disintegrations. While providing a purely functional formulation of the assimilation process, the framework of disintegrations also allows for a more rigorous abstract treatment of the conditional law. 
This can be leveraged to provide fast update formulae for GP under linear 
operator observations \citep{travelletti2021} 
and we hope 
it can serve as foundations for further theoretical enquiries and practical developments in probabilistic function modelling.


\appendix

\section{Proofs of Equivalence of Gaussian Process and Gaussian measure}\label{sec:equiv_process_measure}

We here briefly recall the theorems and definitions needed 
to prove our main results, and present the proofs. 
For the functional analysis background, we refer the reader to \citet{folland} and to \citet{TARIELADZE2007851,Vakhania1987} for the background about Gaussian measures. The theorems for equivalence between Gaussian processes and Gaussian measures are adapted from \citet{rajput_gp_vs_measures}, while the one for conditioning / disintegration of Gaussian measures are adapted from \citet{TARIELADZE2007851}.

Most of this chapter will be concerned with random variables taking values in the space of continuous function $C(D)$, where $D$ is a compact metric space. When endowed with the sup-norm, $C(D)$ turns into a Banach space. This space enjoys two useful properties:
\begin{enumerate}
    \item $C(D)$ is separable, and as a consequence, the Borel $\sigma$-algebra and the cylindrical $\sigma$-algebra on
        $C(D)$ agree.
    \item The dual space $C(D)^*$ is the space of Radon measures on $D$ and (by Riesz-Markov-Kakutani \citep{rudin}) for
        all $\ell \in C(D)^*:\exists \lambda$ Radon measure on $D$ such that
        \[
            \forall f\in C(D): \ell(f) = \int f d\lambda.
        \]
\end{enumerate}

In order to prove \Cref{th:gaussianity_induced_measure} 
and \Cref{th:gaussianity_induced_process}, we first recall a classic approximation 
result for continuous real-valued functions on compact metric spaces that will 
be useful for proving measurability properties and Gaussianity of the measure induced by a GP. 
For reference, see \citep[Theorem 2.10]{folland}.
\begin{lemma}\label{th:proof_approx_box}
    Let $D$ be a compact metric space and $f:D\rightarrow \mathbb{R}$ be continuous. 
    Then, there exists a sequence of simple functions $f_n$ converging to $f$ uniformly 
    on $D$. For each $n$, the approximating function can be written as:
    \begin{align}
        f_n &= \sum_{k=0}^{K(n)}
        f\left(t_k^{(n)}\right)\mathbb{1}_{A^{(n)}_{k}},\label{eq:proof_approx_box}
    \end{align}
        where $K(n)\in\mathbb{N}$, $t_k^{(n)}\in D$ and the $A_k^{(n)}$'s are 
        Borel measurable sets for all $k$.
\end{lemma}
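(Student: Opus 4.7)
The plan is to exploit uniform continuity of $f$ on the compact space $D$ to partition $D$ into finitely many Borel pieces on which $f$ varies by less than $1/n$, and then take $f_n$ to be the simple function that is constant, equal to the value of $f$ at a chosen representative, on each piece.

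More concretely, I would first invoke the fact that a continuous function on a compact metric space is uniformly continuous: for each $n \in \mathbb{N}$ there exists $\delta_n > 0$ such that $|f(s) - f(t)| < 1/n$ whenever $d(s,t) < \delta_n$. Next, using compactness of $D$, I would cover $D$ by finitely many open balls $B(t_k^{(n)}, \delta_n)$, $k = 0, \ldots, K(n)$, centered at points $t_k^{(n)} \in D$.

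Then the key step is to \emph{disjointify} this cover while preserving Borel measurability: set
\begin{equation*}
    A_0^{(n)} := B(t_0^{(n)}, \delta_n), \qquad
    A_k^{(n)} := B(t_k^{(n)}, \delta_n) \setminus \bigcup_{j=0}^{k-1} A_j^{(n)} \quad (k \geq 1).
\end{equation*}
Since open balls are Borel and the operations of finite union and set difference preserve Borel measurability, each $A_k^{(n)}$ is Borel; moreover the $A_k^{(n)}$ are pairwise disjoint and still cover $D$. Defining
\begin{equation*}
    f_n := \sum_{k=0}^{K(n)} f\bigl(t_k^{(n)}\bigr) \mathbb{1}_{A_k^{(n)}},
\end{equation*}
every $s \in D$ belongs to exactly one $A_k^{(n)}$, and by construction $A_k^{(n)} \subset B(t_k^{(n)}, \delta_n)$, so $d(s, t_k^{(n)}) < \delta_n$ and hence $|f(s) - f_n(s)| = |f(s) - f(t_k^{(n)})| < 1/n$. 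Since this bound is uniform in $s$, we obtain $\|f - f_n\|_{\infty} \leq 1/n \to 0$, which yields uniform convergence on $D$.

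There is no significant obstacle here; the argument is entirely routine. The only point that warrants any care is the disjointification, to ensure that the resulting pieces remain Borel (so that $f_n$ is a genuine simple function in the measure-theoretic sense) while still covering $D$ exactly once; taking the $A_k^{(n)}$ as iterated set differences of open balls handles this cleanly.
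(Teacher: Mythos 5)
Your proof is correct, and the construction delivers exactly what the lemma asserts: Borel pieces, coefficients of the form $f(t_k^{(n)})$, and the uniform bound $\|f - f_n\|_{\infty} \le 1/n$. It takes a genuinely different route from the paper, though. The paper offers no proof of its own: it recalls the lemma as a classical result and points to \citet[Theorem 2.10]{folland}, whose proof partitions the \emph{range} of $f$ into dyadic intervals and takes as the sets $A_k^{(n)}$ the preimages $f^{-1}([k2^{-n},(k+1)2^{-n}))$, which are Borel by continuity of $f$; to match the statement here one then selects a representative $t_k^{(n)}$ in each nonempty preimage, so that $|f(s)-f(t_k^{(n)})| \le 2^{-n}$ on $A_k^{(n)}$, and uniform convergence follows because $f$ is bounded on the compact set $D$. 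You instead partition the \emph{domain}: Heine--Cantor gives uniform continuity, compactness gives a finite subcover by $\delta_n$-balls, and your iterated set differences disjointify the cover while keeping each piece Borel. Both arguments are sound. Yours is more self-contained and produces the $f(t_k^{(n)})$ coefficients directly, whereas Folland's statement taken literally yields dyadic constants as coefficients and needs the extra representative-selection step to yield the lemma as stated; the range-decomposition route, on the other hand, buys generality, since it applies to any bounded measurable function with no appeal to uniform continuity, which is the form in which the result is usually quoted.
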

~\\
We now show that, for stochastic processes on compact metric spaces, having continuous
sample paths is enough to ensure product measurability.
\begin{theorem}\label{th:continuous_product_meas}
    Let $\left(\Omega,\mathcal{F},\mathbb{P};Z(x;\omega),x\in D\right)$ be a
    stochastic process on a compact metric space $D$ with continuous sample paths.
    Then it is measurable as a mapping $\left(D\times \Omega,~
        \mathcal{B}(D)\times \mathcal{F}\right)\rightarrow
        \left(\mathbb{R},~\mathcal{B}(\mathbb{R})\right)$ (product measurable).
\end{theorem}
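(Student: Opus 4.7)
The plan is to exhibit $Z$ as the pointwise limit of a sequence of jointly measurable simple-in-$s$ processes, and then invoke the fact that pointwise limits of measurable functions are measurable. The construction of the approximants will be uniform in $\omega$: we partition $D$ once (not dependent on $\omega$) and evaluate $Z$ only at a finite number of fixed sites on each level.

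Concretely, I would proceed as follows. Since $D$ is compact, it is totally bounded, so for each $n \in \mathbb{N}$ it admits a finite cover by open balls of radius $1/n$; by the usual disjointification trick (subtracting previously formed pieces and intersecting with a closed version of the ball) I obtain a finite Borel partition $A_1^{(n)}, \dots, A_{K(n)}^{(n)}$ of $D$ with $\operatorname{diam}(A_k^{(n)}) \le 2/n$. Pick any representative $t_k^{(n)} \in A_k^{(n)}$ and define
\begin{equation*}
    Z_n(s, \omega) \;:=\; \sum_{k=1}^{K(n)} Z\!\left(t_k^{(n)}, \omega\right)\, \mathbb{1}_{A_k^{(n)}}(s).
\end{equation*}
Each summand is a product of the $\mathcal{F}$-measurable random variable $\omega \mapsto Z(t_k^{(n)}, \omega)$ (measurable because $Z$ is a stochastic process, so each coordinate process is a random variable) and the $\mathcal{B}(D)$-measurable indicator $\mathbb{1}_{A_k^{(n)}}$. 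Hence each $Z_n$ is $\mathcal{B}(D) \otimes \mathcal{F}$-measurable, being a finite sum of measurable functions on the product space.

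It remains to show $Z_n(s, \omega) \to Z(s, \omega)$ for every $(s, \omega) \in D \times \Omega$. Fix $\omega$; the path $Z(\cdot, \omega)$ is continuous on the compact metric space $D$, hence uniformly continuous. Given $\varepsilon > 0$, pick $\delta > 0$ with $|Z(s, \omega) - Z(s', \omega)| < \varepsilon$ whenever $d(s, s') < \delta$, and choose $n$ large enough that $2/n < \delta$. For any $s \in D$ there is a unique $k$ with $s \in A_k^{(n)}$, and then $d(s, t_k^{(n)}) \le \operatorname{diam}(A_k^{(n)}) \le 2/n < \delta$, so $|Z_n(s, \omega) - Z(s, \omega)| = |Z(t_k^{(n)}, \omega) - Z(s, \omega)| < \varepsilon$. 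This gives pointwise (indeed uniform in $s$) convergence $Z_n \to Z$ on $D \times \Omega$.

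Since $Z$ is the pointwise limit of the $\mathcal{B}(D) \otimes \mathcal{F}$-measurable functions $Z_n$, it is itself $\mathcal{B}(D) \otimes \mathcal{F}$-measurable, which is the claim. The only step requiring mild care is the construction of the disjoint Borel partition of $D$ from the finite cover (which is routine, but must be done before one can write $Z_n$ as a bona fide simple function in $s$); all other steps are essentially forced by compactness plus path continuity. The template here is nothing but \Cref{th:proof_approx_box} applied simultaneously across all $\omega$ by using a single $\omega$-independent partition of $D$.
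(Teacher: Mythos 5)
Your proof is correct, but it takes a genuinely different route from the paper: the paper disposes of this theorem in one line by citing an external result, namely \citet[Theorem 2]{gowrisankaran_measurability_product} on joint measurability of functions that are measurable in one variable and continuous in the other, whereas you give a self-contained constructive argument. Your construction is sound in every step: total boundedness of the compact metric space $D$ yields, for each $n$, a finite Borel partition of mesh at most $2/n$ (after the standard disjointification, discarding empty pieces); the resulting simple-in-$s$ approximants $Z_n$ are finite sums of products of a $\mathcal{B}(D)$-measurable indicator with an $\mathcal{F}$-measurable coordinate variable, hence product measurable; and uniform continuity of each sample path gives convergence $Z_n(s,\omega)\to Z(s,\omega)$ (uniformly in $s$ for each fixed $\omega$), so $Z$ is product measurable as a pointwise limit. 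The key point you got right, and which makes the argument work, is that the partition and the representative points $t_k^{(n)}$ are chosen independently of $\omega$; this is exactly what lets the one-variable approximation template of \Cref{th:proof_approx_box} (which the paper only deploys $\omega$-by-$\omega$ inside the proof of \Cref{th:gaussianity_induced_measure}) be upgraded to a statement about the product space. What each approach buys: yours makes the paper self-contained and elementary, needing nothing beyond compactness and path continuity, and in fact proves slightly more (uniform-in-$s$ approximation by jointly measurable simple processes); the paper's citation is shorter and leans on a theorem that holds in greater generality than compact metric index spaces, which is why the authors preferred it.
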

\begin{proof}
    This is a direct consequence of \citet[Theorem 2]{gowrisankaran_measurability_product}.
\end{proof}

We now have all the ingredients to prove the main theorems about equivalence of process and measure.
\begin{proof}{(\Cref{th:gaussianity_induced_measure})}
    By \Cref{th:continuous_product_meas}, the only thing left to prove is that
    for all $\ell \in C(D)^*$ the real random variable $\ell \circ \Phi$ is Gaussian.\\
    By the Riesz-Markov representation theorem, there exists a Radon measure
    $\lambda$ on $D$ representing $\ell$.
    Now, for each $\omega\in\Omega$, we use \Cref{th:proof_approx_box} to get a uniform approximation
    $Z_n(\cdot; \omega)\rightarrow Z(\cdot; \omega)$ as in \Cref{eq:proof_approx_box}. We then have:
    \begin{align*}
        \ell\circ\Phi(\omega)&=\ell\left(\lim_{n\rightarrow\infty}Z_n\left(\cdot;\omega\right)\right)
        = \lim_{n\rightarrow\infty}\int
        \sum_{k=0}^{K(n)}Z\left(t^{(n)}_{k};
        \omega\right)\mathbb{1}_{A^{(n)}_{k}}d\lambda\\
                          &= \lim_{n\rightarrow\infty}\sum_{k=0}^{K(n)}
                          Z\left(t^{(n)}_{k};
                          \omega\right)\lambda\left(A^{(n)}_{k}\right).
    \end{align*}
    Now, as a convergent series of Gaussian random variables, the above is
    Gaussian (use characteristic functions and L\'{e}vy convergence theorem).
\end{proof}

We now turn to the proof of \Cref{th:gaussianity_induced_process}.

\begin{proof}{(\Cref{th:gaussianity_induced_process})}
    Let $\Omega=C(D)$ and $\mathcal{F}$ be the Borel sigma algebra on $C(D)$
    and define a collection of random variables
    \[
        Z_s:\left(\Omega,\mathcal{F}, \mu\right)\rightarrow
            \left(\mathbb{R}, \mathcal{B}\left(\mathbb{R}\right)\right),~ \omega\mapsto
            \delta_s\left(\omega\right)
    \]
    for all $s \in D$. Since for all $s\in D$, the Dirac functionals
    $\delta_s$ belong to the dual of $C(D)$, we have that $Z_t$ is a Gaussian
    real random variable for all $s$. Now, for $s_1, ..., s_n \in D$, any
    linear combination of the components of the vector $\left(Z_{s_1}, ..., Z_{s_n}\right)$
    may be written an element of $C(D)^*$, and will hence be Gaussian
    distributed by Gaussianity of the measure. This shows that $Z$ is a
    Gaussian process on $D$.
\end{proof}
From the above theorems, it is also clear that if $Z$ is the process induced by a Gaussian measure on $C(D)$, then for any $s\in D$, we have
\begin{equation}\label{eq:corresp_expectations}
    \mathbb{E}_{\mu}\left[f(s)\right] = \mathbb{E}\left[Z_s\right]
\end{equation}
and the same is true if $Z$ is a GP on $D$ with trajectories in $C(D)$ and $\mu$ is the measure induced by the process. This allows us to translate everything from process to measure and back without needing to worry about the details. Finally, using the fact that the Dirac deltas belong to the dual we may also prove \Cref{th:corresp_mean_cov} about the correspondence between mean element and covariance operator of the induced measure and mean and covariance function of the process.
\begin{proof}{(\Cref{th:corresp_mean_cov})}
For $s, s_1, s_2\in D$, let:
 \begin{align*}
     m_s :&= \langle m_{\mu}, \delta_s\rangle = \int_{C(D)}f(s)d\mu(f) = \mathbb{E}_{\mu}\left[f(s)\right]=\mathbb{E}\left[Z_s\right]\\
     k(s_1, s_2) :&= \langle \delta_{s_1}, C_{\mu}\delta_{s_2} \rangle = \delta_{s_1}\left[\int_{C(D)} f(s_2)f d\mu(f) - m_{s_1} m_{\mu}\right]\\
      &= \mathbb{E}_{\mu}\left[f(s_1)f(s_2)\right] - \mathbb{E}_{\mu}\left[f(s_1)\right]\mathbb{E}\left[f(s_2)\right]\\
      &= \mathbb{E}\left[Z_{s_1}Z_{s_2}\right] - \mathbb{E}\left[Z_{s_1}\right]\mathbb{E}\left[Z_{s_2}\right].
 \end{align*}
Note that exchanging Dirac deltas and integration is allowed by Fubini since Gaussian measures are finite and the last equalities are consequences of \Cref{eq:corresp_expectations}.
\end{proof}
The extension of \Cref{th:gaussianity_induced_measure} and \Cref{th:gaussianity_induced_process} 
to processes and measures on RKHS is straightforward. Indeed, 
the measure-to-process correspondence follows directly from the fact 
that the evaluation functionals 
belong to the dual of the RKHS. 
For the process-to-measure correspondence, the crucial property is 
the Gaussianity of 
linear functionals of the field, which in a RKHS $\mathcal{H}$ is 
automatically satisfied since any linear 
functional can be expressed as an infinite linear combination of 
reproducing kernel values, which in turn act as evaluation functionals: 
\begin{align*}
    \left\langle \ell, Z \right\rangle &= 
    \left\langle \sum_{i=1}^{\infty} a_i k(x_i, \cdot), Z\right\rangle_{\mathcal{H}} = 
    \sum_{i=1}^{\infty} a_i Z_{x_i},
\end{align*}
which, as a convergent sum of Gaussian random variables, is Gaussian.

\section{Conditioning, Disintegration and Link to Finite-Dimensional Formulation:}\label{sec:math_and_proofs}
We now turn to the proof of \Cref{th:tarieladze_disintegration}.

\begin{proof}{(\Cref{th:tarieladze_disintegration})}
    To prove the theorem, we have to adapt the proof of \citet{TARIELADZE2007851}[Theorem 3.11] to the non-centered case. Compared to the original theorem, the conditional covariance operator $\tilde{C}_{\mu}$ hasn't changed, whereas the conditional mean $\tilde{m}_{\mu}\left(y\right)$ clearly still defines a continuous mapping satisfying $G\tilde{m}_{\mu}\left(y\right)=y$ for all $y$ in the range of $C_{\nu}$.
    Hence, for all $y \in Y$, we can still use \citet{TARIELADZE2007851}[Lemma 3.8] to define $\mu_{|G=y}$ as a Gaussian measure having mean element $\tilde{m}_{\mu}\left(y\right)$ and covariance operator $\tilde{C}_{\mu}$. What is left to check is that it satisfies the conditions in \Cref{def:disintegration} to be a disintegration of $\mu$ with respect to $G$.

In the following, let $y \in Y$ and $A\in\mathcal{A}$ be arbitrary.
\begin{itemize}
    \item The measurability of the mapping $y \mapsto \mu_{|G=y}\left(A\right)$ for fixed $A$ holds since, compared to the centered case, the conditional mean $\tilde{m}_{\mu}\left(y\right)$ is only translated by an element that does not depend on $y$.
    \item Define $Y_0:=G m_{\mu} + C_{\nu}\left(Y^*\right)$. We have $\mu\circ G^{-1}\left(Y_0\right)=1$ by \citet{TARIELADZE2007851}[Lemma 3.3] and \citet{TARIELADZE2007851}[Corollary 3.7]. Following the exact same reasoning as in the proof of \citet{TARIELADZE2007851}[Theorem 3.11] we have that $\mu_y\left(G^{-1}\left(y\right)\right)=1$.
    \item By \citet{TARIELADZE2007851}[Proposition 3.2], the last thing we have to check is that 
    \[
    \hat{\mu}\left(x^*\right) = \int_Y \hat{\mu}_{|G=y}\left(x^*\right)d\nu\left(y\right),~\forall x^* \in X^*,
    \]
    where $\hat{\mu}\left(\boldsymbol{\cdot}\right)$ denotes the characteristic functional of $\mu$ (see \citet{TARIELADZE2007851}[Section 3.2]. Compared to the original proof, only the mean element is changed, so for the sake of simplicity we only consider the steps of the proof that differ from the original ones.\\
    We have that 
    \begin{align*}
        \int_Y \exp\left[i\left\langle \tilde{m}_{\mu}\left(y\right), x^*\right\rangle\right]d\nu\left(y\right) &= 
        \exp\left[i\left\langle m_{\mu}, x^* \right\rangle\right]\\
                                                                                                                & \cdot \int_y \exp\left[i \left\langle
        \sum_{i=1}^n \left\langle y - G m_{\mu}, y_i^* \right\rangle C_{\mu} G* y_i^*, x^*
        \right\rangle\right]d\nu\left(y\right),
    \end{align*}
    which, after a change of variable $y\mapsto y - Gm_{\mu}$ can be seen to be the characteristic function of a centered Gaussian measure with covariance $C_{\mu}$ by following the same argument as in the original proof (the same argument is presented in more detail in the proof of the next 
    theorem).
    \end{itemize}
\end{proof}
The last theorem we have to prove is the one about the transitivity of disintegrations.
\begin{proof}{(\Cref{th:transitivity_disintegration})}
By unicity of disintegrations \citep{TARIELADZE2007851}[Remark 3.12], we only have to prove that the family 
\[
    \left(\left(\mu_{|G_1=y_1}\right)_{|G_2=y_2}\right)_{\left(y_1, y_2\right)\in Y_1 \bigoplus Y_2}
\] defines a disintegration of $\mu$ with respect to $\left(G_1, G_2\right)$.\\
First a word of caution: there exist no \textit{canonical} norm on the direct sum of Banach spaces. However, there are several norms on the direct sum that induce the product topology \citep{salamon}[Exercice 1.30]. We here work with any of these. Then, the Borel $\sigma$-algebra on the direct sum is given by the product of the Borel $\sigma$-algebras of the components \citep{billingsley}[p.244].

Here, by construction, for any $\left(y_1, y_2\right) \in Y_1 \bigoplus Y_2$, the measure $\left(\mu_{| G_1=y_1}\right)_{|G_2=y_2}$ is defined as a Gaussian measure having mean element
\begin{align*}
    \tilde{m}_{\mu}^{(1)}\left(y_1\right) + \sum_{i=1}^{\datdim_2} \left\langle
    y_2 - G_2\tilde{m}_{\mu}^{(1)}\left(y_1\right), y_i^{(2)*}\right\rangle \tilde{C}_{\mu}^{(1)}G_2^* y_i^{(2)*},
\end{align*}
and covariance operator
\begin{align*}
	\tilde{C}_{\mu}^{(1,2)} :&= \tilde{C}_{\mu}^{(1)} - \sum_{i=1}^{\datdim_2}\left\langle \tilde{C}_{\mu}^{(1)} G_2^* y_i^{(2)*}, \cdot\right\rangle
	\tilde{C}_{\mu}^{(1)} G_2^* y_i^{(2)*},
\end{align*}
where $\tilde{m}_{\mu}^{(1)}$ and $\tilde{C}_{\mu}^{(1)}$ denote the mean element 
and covariance operator of $\mu_{| G_1=y_1}$ and 
$(y_i^{(2)*})_{i=1, ..., \datdim_2}$ is any representing sequence 
for the operator $G_2 \tilde{C}_{\mu}^{(1)} G_2^*$. Note that the assumption that 
$C_{\nu_2}$ has finite rank implies that the aforementioned operator also has finite rank. 
Since for all $y_1\in Y_1$ the measure $\mu_{| G_1=y_1}$ is Gaussian, we have by \Cref{th:tarieladze_disintegration} that $\left(\mu_{| G_1=y_1}\right)_{| G_2 = y_2}$ is Gaussian.

As in the previous proof, we have to check the three conditions of \Cref{def:disintegration}.
\begin{itemize}
	\item For fixed $A$, the mapping $\left(y_1, y_2\right) \mapsto 
\left(\mu_{|G_1=y_1}\right)_{G_2=y_2}\left(A\right)$ is an addition   of a $\mathcal{B}\left(Y_1\right)$-measurable mapping with a $\mathcal{B}\left(Y_2\right)$-measurable mapping, and, as such, measurable with respect to the product $\sigma$-algebra.

    \item Let $Y:=Y_1 \bigoplus Y_2$ and note that $Y^* = Y_1^* \bigoplus Y_2^*$ (dual of direct sum is the direct sum of the duals). Then define 
    $Y_0 = Gm_{\mu} + G C_{\mu} G^{*}\left(Y_1^* \bigoplus Y_2^*\right)$. Note that the Gaussian measure $\mu\circ G^{-1}$ has mean $G m_{\mu}$ and covariance operator $G C_{\mu} G^*$, hence 
    $\mu \circ G^{-1}\left(Y_0\right) = 1$ 
    by \citet{TARIELADZE2007851}[Lemma 3.3].
    
    For any $\left(y_1, y_2\right) \in Y_0$ we have that the Gaussian measure $\left(\mu_{| G_1 = y_1}\right)_{| G_2 = y_2} \circ G^{-1}$ has covariance operator $G \tilde{C}_{\mu}^{(1,2)} G^*$. Computing the operator componentwise, we have that:
    \begin{align*}
    ´	G_2 \tilde{C}_{\mu}^{(1,2)} G_2^* &= G_2 \tilde{C}_{\mu}^{(1)} G_2^*  - \sum_{i=1}^{\datdim_2} \left\langle
    \tilde{C}_{\mu}^{(1)}G_2^* y_i^{(2)*}, G_2^* \right\rangle 
    G_2 \tilde{C}_{\mu}^{(1)} G_2^* y_i^{(2)*} = 0,
    \end{align*}
    where the last equality follows from \citet{TARIELADZE2007851}[Lemma 3.4, (c)] since $y_i^{(2)*}$ is a $G_2\tilde{C}_{\mu}^{(1)}G_2^*$-representing sequence. An analogous computation for the other components shows that they all vanish.
    \end{itemize}

Defining $\mu^{(1,2)}_{y_1, y_2}:=\left(\mu_{| G_1=y_1}\right)_{| G_2 = y_2}$, the last point to show is that
    \begin{align*}
    	\hat{\mu}\left(x^*\right) &= 
    	\int_{Y_1}\int_{Y_2} \hat{\mu}^{(1,2)}_{y_1, y_2}\left(x^*\right)d\nu_1\left(y_1\right)d\nu_2\left(y_2\right),~\forall x^* \in C(D)^*,
    \end{align*}
    where we have defined the measures $\nu_1 := \mu \circ G_1^{-1}$ and $\nu_2 := \mu_{| G_1 = y_1}\circ G_2^{-1}$, omitting the dependence on $y_1$ for simplicity. Using the fact that $\mu_{| G_1=y_1}$ is a disintegration of $\mu$ with respect to $G_1$, defining $x_j^{(2)*}:= G_2^* y^{(2)*}$ and performing a change of variables, we have that
    \begin{align*}
    \int_{Y_1}\int_{Y_2} \hat{\mu}^{(1,2)}_{y_1, y_2}\left(x^*\right)d\nu_1\left(y_1\right)d\nu_2\left(y_2\right)
    &= \hat{\mu}\left(x^*\right)
    + \int_{Y_1}\int_{Y_2} \exp\left[i\sum_j 
    \left\langle y_2, y_j^{(2)*}\right\rangle \left\langle C_{\mu} x_j^{(2)*}, x^*\right\rangle \right. \\
    &+ \left. \frac{1}{2} \sum_j \left\langle C_{\mu}^{(1)}x_j^{(2)*}, x^* \right\rangle \left\langle C_{\mu}^{(1)} x_j^{(2)*}, x^*\right\rangle\right].
    \end{align*}
Now defining $M\left(y_2\right):= \sum_j \left\langle y_2, y_j^{(2)*}\right\rangle C_{\mu}^{(1)}x_j^{(2)}$ and $R_1 := \sum_j \left\langle C_{\mu}^{(1)} x_j^{(2)*}, \cdot\right\rangle C_{\mu}^{(1)}x_j^{(2)*}$ (which can be thought of as conditional means and covariance operator), the remaining integral reduces to:
\begin{align*}
\int_{Y_1} \int_{Y_2} \exp \left[
    i \left\langle M\left(y_2\right), x^*\right\rangle + \frac{1}{2} R_1 x^*\right]d\nu_1\left(y_1\right) d\nu_2\left(y_2\right).
    \end{align*}
    We can simplify the first summand by noticing that it amounts to the characteristic function of a Gaussian measure:
    \begin{align*}
    \int_{Y_2} \exp \left[
    i \left\langle M\left(y_2\right), x^*\right\rangle\right] d\nu_2\left(y_2\right)
    &= \hat{\nu}_2\left(M^*\left(x^*\right)\right) 
    = \int_{Y_2} \exp \left[-\frac{1}{2} \left\langle C_{\nu_2} M^*\left(x^*\right), M^*\left(x^*\right)\right\rangle\right]\\
    &= \int_{Y_2} \exp \left[-\frac{1}{2} \sum_j\left\langle
    C_{\mu}^{(1)}x_j^{(2)*}, x^*\right\rangle^2\right] d\nu_2\left(y_2\right)\\
    &= 
    \int_{Y_2} \exp \left[-\frac{1}{2} R_1 x^*\right]d\nu_2\left(y_2\right),
    \end{align*}
    where the penultimate equality follows from $C_{\nu_2}$-orthogonality of the $y_i^{(2)*}$ sequence. This concludes the proof. Note when computing the characteristic function of $\nu_2$, we have omitted the mean term due to the change of variable performed earlier (to be perfectly rigorous, one should use a different notation for the transformed measure).
\end{proof}

\textbf{Link to Finite Dimensional case}
When the inversion data is
\textit{finite-dimensional}, that is the observation operator $G$ maps into
$\mathbb{R}^n$ and $\mathbb{R}^n$ is considered as a Banach space with respect
to the $2$-norm. One can then canonically identify $\mathbb{R}^n$ with its dual
using the dot product: $v\mapsto \langle v, \cdot \rangle$. In the
following, when elements of $\mathbb{R}^n$ are involved, the duality bracket
$\langle \cdot, \cdot \rangle$ will denote the dot product, also, $e_i, i=1,
..., n$ will be used to denote the canonical basis of $\mathbb{R}^n$.
We now prove that $y_i:=C_{\nu}^{-1/2}e_i,~i=1, ..., n$ forms a $C_{\nu}$-representing sequence.
\begin{proof}{(\Cref{th:remark_rep_seq})}
    First of all, the $y_i$ form a $C_{\nu}$-orthonormal family since
    \begin{align*}
        \langle C_{\nu} y_i, y_j \rangle &= \langle C_{\nu}^{1/2} y_i, C_{\nu}^{1/2} y_j \rangle
        = \langle e_i, e_j \rangle = \delta_{ij},
    \end{align*}
    where the first equality follows by self-adjointness of $C_{\nu}$. Also
    remember that since here we are working over $\mathbb{R}^n$, the duality
    bracket denotes the dot product and $\mathbb{R}^n$ is identified with its
    dual. Finally, according to \citet{TARIELADZE2007851}[Lemma 3.4], the last
    thing we have to show is 
    that for any $v \in \mathbb{R}^n$: $C_{\nu}v = \sum_{i=1}^n
    \langle C_{\nu} y_i, v\rangle C_{\nu} y_i$. Note that since $C_{\nu}$ is a
    positive self-adjoint operator, the $y_i$'s form a basis of $\mathbb{R}^n$,
    and we can thus write $v=\sum_{i=1}^n v_i y_i$ for some component $v_i$. Then 
    \begin{align*}
        \sum_{i=1}^n \langle C_{\nu} y_i, v\rangle C_{\nu} y_i &= \sum_{i,j = 1}^n \langle
        C_{\nu} y_i, v_j y_j \rangle C_{\nu} y_i = v^i C_{\nu}y_i = C_{\nu} v
    \end{align*}
\end{proof}

\begin{proof}{(\Cref{th:link_tarantola})}
As before, let $y_i:=C_{\nu}^{-1/2}e_i,~i=1, ..., n$.
In order to get closed-form formulae for the posterior under such operators, we
need to be able to compute the action of the adjoint $G^*$. We begin by
recalling the
definition of the adjoint of a linear operator $T:X\rightarrow Y$ between
Banach spaces:
\begin{align*}
    T^*:&Y^*\rightarrow X^*\\
        &y^*\mapsto \left(x\mapsto \langle y^*, Tx\rangle\right).
\end{align*}


Now if we consider a (bounded) linear form $G_j:X:\rightarrow
\mathbb{R}$, then its adjoint is given by:
\begin{align*}
    G_j^*:&\mathbb{R}\rightarrow X^*\\
    &a\mapsto \left(f\mapsto a \cdot G_j f \right).
\end{align*}
So the adjoint of the observation operator may be written as:
\begin{align*}
    G^*:\mathbb{R}^n &\rightarrow X^*\\
    \left(a_1, ..., a_n\right) &\mapsto \left( f\mapsto
            a_1 \cdot G_1 f + ... + a_n
    \cdot G_n f \right).
\end{align*}

There is one last computation that we need to perform before getting the mean
and covariance:
\begin{align*}
    \left\langle C_{\mu}G^* y^{(i)}, \delta_s \right\rangle 
    &= \left\langle C_{\mu}\delta_s, G^* y^{(i)} \right\rangle 
    = y^{(i)} \cdot G\left(C_{\mu}\delta_s\right)= y^{(i)} \cdot G k(\cdot, s)
    = y^{(i)}\cdot K_{sG}.
\end{align*}
~\\
Putting everything together we are now able to express the covariance
operator:
\begin{align*}
    \tilde{k}(s_1, s_2) &= k(s_1,s_2) - \sum_{i=1}^n y^{(i)}\cdot K_{s_1G} y^{(i)}\cdot K_{s_2G}\\
                    &= k(s_1, s_2) - \sum_{i=1}^n K_{s_1G}^T y^{(i)} \left(y^{(i)}\right)^T K_{s_2G}
                    \\
                    &= k(s_1, s_2) - \sum_{i=1}^n K_{s_1G}^T C_{\nu}^{-1/2}e_i e_i^T C_{\nu}^{-1/2} K_{s_2G}\\
                    &= k(s_1, s_2) - K_{s_1G}^T K_{GG}^{-1} K_{s_2G}.
\end{align*}
Where we have used the fact that $\sum_{i=1}^n e_i e_i^T =\bm{I}_n$ and that:
\begin{align*}
    e_i\cdot GC_{\mu}G^{*}e_j &= G_i(G_j k(\cdot, \cdot)).
\end{align*}
Note that this last step requires one to explicitly compute the action of the $G_i$'s 
on the covariance operator $C_{\mu}$. This can be done in the case where $X=C(D)$ since 
the individual components on the observation operator can be written as integrals 
with respect to Radon measures $G_i f = \int_D f(s) d\lambda_i\left(s\right)$ or 
in the case where $X$ is a RKHS, since then the components can be written 
as infinite linear combinations of Dirace deltas $G_i f = \sum_{k=1}^{\infty} a_k^{(i)}f(s^{(i)}_k)$. Computing the action on the covariance operator in the general case is not trivial.
The mean can be obtained through a similar argument.
\end{proof}

\textbf{Proofs for Infinite Rank Data}
\begin{proof}{(\Cref{th:tarieladze_disintegration_infinite})}
As before, compared to the centered case, only the conditional mean changes. 
Thanks to \cite[Lemma 3.5]{TARIELADZE2007851} we can still 
select a countably 
infinite $C_{\nu}$ representing sequence $\left(y_i\right)_{i \in \mathbb{N}}$.
Now define, for all $n \in \mathbb{N}$:
    \begin{align}
        \tilde{m}^{(n)}_{\mu}(y) &= m_{\mu}
        +
        \sum_{i=1}^{n} \left\langle y - Gm_{\mu}, y_i^* \right\rangle C_{\mu} G^* y_i^*.
    \end{align} 
Furthermore, define the spaces: 
$Y_2:=\lbrace y \in Y:~\tilde{m}^{(n)}_{\mu}(y) \text{ converges}\rbrace$, 
and $Y_3:= \lbrace y\in Y:~\lim_{n\rightarrow \infty} || y 
- \sum_{i=1}^n \left\langle y - G m_{\mu}, y_i^*\right\rangle C_{\nu}y_i^* || = 0\rbrace$. 
    We begin by showing that these subspaces of $Y$ have full measure.\\

\textbf{Claim:} $\nu(Y_2) = 1$.
\begin{proof}
    Our goal is to show that the random element 
    $\tilde{m}_{\mu}^n$ converges $\nu$-almost surely in $X$. 
    First, define $\xi_i:=\left\langle y - Gm_{\mu}, y_i^* \right\rangle C_{\mu} G^* y_i^*$. 
    Thanks to $C_{\nu}$-orthonormality, 
    the $y_i^*$ are independent Gaussian random variables, and hence the $\xi_i$ too.
    Hence, 
    by Ito-Nisio \cite[Theorem 5.2.4]{Vakhania1987}, 
    we get $\nu$-almost-sure convergence provided we can show that 
    there exists a random probability measure $\mu'$ on $X$ such that 
    the joint characteristic function converges to the characteristic function of $\mu'$:
    \begin{align*}
        \prod_{i=1}^n \hat{\mathbb{P}}_{\xi_i}
        \left(f\right)\rightarrow \hat{\mu'}(f),~\text{all }f\in X^*.
    \end{align*}
    By independence of the $\xi_i$, we have, for $f \in X^*$:
    \begin{align*}
        \prod_{i=1}^n \hat{\mathbb{P}}_{\xi_i}(f) &=
        \int_Y \exp\left[
            i \left\langle
                f, \sum_{i=1}^n\left\langle y - Gm_{\mu}, y_i^*\right\rangle
                C_{\mu}G^*y_i^*\right\rangle
            \right]d\nu(y)\\
                                                  &=
        \int_Y \exp\left[
            i \left\langle
                y', \sum_{i=1}^n y_i^*\left\langle
                    f, C_{\mu}G^*y_i^*\right\rangle
                    \right\rangle
                \right] d\nu'(y'),
    \end{align*}
    where we have performed a change of variable $y':= y - Gm_{\mu}$ and hence $\nu$' 
    is a centred Gaussian measure with covariance operator $C_{\nu}$. Now, using the 
    characteristic function of Gaussian measures, the above is equal to:
    \begin{align*}
        \hat{\nu}\left(\sum_{i=1}^n y_i^*\left\langle 
        f, C_{\mu}G^*y_i^*\right\rangle\right)
        &=
        \exp\left[
            -\frac{1}{2}\sum_{i, j=1}^n\left\langle 
                f, C_{\mu}G^*y_i^*\right\rangle
                \left\langle f, C_{\mu}G^* y_j^*\right\rangle
            \left\langle C_{\nu}y_i^*, y_j^*\right\rangle\right]\\
        &=
        \exp\left[-\frac{1}{2}\sum_{i=1}^n \left\langle
        f, C_{\mu}G^*y_i^*\right\rangle^2\right],
    \end{align*}
    where the last equality follows from $C_{\nu}$-orthonormality of the 
    representing sequence. We thus have:
    \begin{align}
        \lim_{n\rightarrow \infty} \prod_{i=1}^n \hat{\mathbb{P}}_{\xi_i}(f) 
        &= \exp\left[-\frac{1}{2}\left\langle R_1 f, f\right\rangle\right]\label{eq:prod_charac},
    \end{align}
    where $R_1:=\lim_{n\rightarrow \infty}\sum_{i=1}^{\infty}
    \left\langle C_{\mu}G^*y_i^*, \bullet\right\rangle
    C_{\mu}G^*y_i^*$ is a Gaussian covariance by \cite[Lemma 3.4 and Proposition 3.9]{TARIELADZE2007851}. 
    The Claim follows from the fact that for any Gaussian covariance, there 
    exists a Gaussian measure having that covariance as covariance operator 
    \cite[Lemma 3.8]{TARIELADZE2007851}.
\end{proof}

\textbf{Claim:} $\nu(Y_3) = 1$. 
\begin{proof}
    Note that if $y - G m_{\mu}$ can be written as $C_{\nu}y^*$ for some $y^*\in Y^*$, 
    then it immediatly follows, by \cite[Lemma 3.4]{TARIELADZE2007851}, that:
    \begin{align*}
        \sum_{i=1}^{\infty}\left\langle y - G m_{\mu}, y_i^*\right\rangle C_{\nu}y_i^* 
        &= 
        \sum_{i=1}^{\infty}\left\langle y^*, C_{\nu}y_i^*\right\rangle C_{\nu}y_i^* 
        = C_{\nu}y^* = y - Gm_{\mu}.
    \end{align*}
    Now, the subspace whose elements can be written as above is exatly the 
    Cameron-Martin space $C_{\nu}\left(Y^*\right)$. While this is a $\nu$-null space, 
    it is a well-known fact that its closure in $Y$ has full measure, 
    so that there exists a subset of full measure whose elements can 
    be approximated by elements of $C_{\nu}\left(Y^*\right)$ and thus 
    the defining property of $Y_3$ holds on a set of full measure.
\end{proof}

Now, we define $Y_0:=Y_2\cap Y_3$. 
We construct a disintegration $\left(\mu_{|G=y}\right)_{y\in Y_0}$ as in the finite 
rank case, but now restricting to the subspace $Y_0$ where the conditional mean 
is defined. What is left to check is that it satisfies the three defining 
properties of disintegrations \pcref{def:disintegration}. Property 1 holds as in 
the finite rank case. For Property 2, we notice that, for any $y \in Y_0$:
\begin{align*}
    G\tilde{m}_{\mu}(y) &= \lim_{n\rightarrow\infty} \tilde{m}_{\mu}^{(n)}(y)
    = G m_{\mu} - \sum_{i=1}^{\infty}\left\langle G m_{\mu}, y_i^*\right\rangle
    C_{\nu}y_i^*
    + \sum_{i=1}^{\infty}\left\langle 
    y, y_i^*\right\rangle C_{\nu} y_i^* = y,
\end{align*}
since $G m_{\mu}$ is the mean of $\nu$ and thus belongs to the Cameron-Martin space. 
Finally, for Property 3, 
thanks to \cite[Proposition 3.2]{TARIELADZE2007851}, we only 
have to show that the characteristic function of $\mu$ writes as a mixing 
of the characteristic functions of the conditionals, i.e. that:
\begin{align*}
    \hat{\mu}(f) &= \int_Y \hat{\mu}_{|G=y}(f)d\nu(y),\text{ all } f\in X^*.
\end{align*}
Now, for $y \in Y_0$, we have that $\mu_{|G=y}$ is Gaussian, with mean 
$\tilde{m}_{\mu}(y)$ and covariance operator $C_{\mu} - R_1$. Hence, we have:
\begin{align*}
    \int_Y \hat{\mu}_{|G=y}(f)d\nu(y) &=
    \int_Y \exp\left[
        i\left\langle \tilde{m}_{\mu}(y), f\right\rangle 
        - \frac{1}{2}\left\langle C_{\mu}, f\right\rangle 
    + \frac{1}{2}\left\langle R_1 f, f\right\rangle\right]d\nu(y)\\
                                      &=
    \exp\left[i\left\langle m_{\mu}, f\right\rangle - \frac{1}{2}
    \left\langle C_{\mu}f, f\right\rangle\right] 
= \hat{\mu}(f),
\end{align*}
where the second-to-last equality follow from \Cref{eq:prod_charac}. This completes 
the proof in the infinite rank case.
\end{proof}

\section{Explicit Update Formulae for Mean Element and Covariance Operator}\label{sec:explicit}
For the sake of completeness, we here provide detailed update formulae for the mean element and covariance operator, 
as a direct consequence of \Cref{th:transitivity_disintegration}.

\begin{corollary}
    Consider the setting of \Cref{th:transitivity_disintegration} and 
    let $(y_i^{*(12)})_{i=1,...,p_{12}}$ be a $G C_{\mu} G^*$-representing 
sequence, $(y_i^{*(1)})_{i=1,...,p_{1}}$ be a $G_1 C_{\mu} G_1^*$-representing 
sequence and $(y_i^{*(2)})_{i=1,...,p_{2}}$ be a 
    $G_2 C^{(1)}_{\mu} G_2^*$-representing sequence. Then we have:
    {\small
\begin{align*}
    C_{\mu} - \sum_{i=1}^{p_{12}}\left\langle C_{\mu}G^*y_i^{(12)}, \bullet\right\rangle
    C_{\mu}G^*y_i^{*(12)}
    &=
    C_{\mu}
    - \sum_{i=1}^{p_1}\left\langle C_{\mu}G^*y_i^{*(1)}, \bullet\right\rangle
    C_{\mu}G_1^*y_i^{*(1)}
    -
    \sum_{j=1}^{p_2}\left\langle C_{\mu}G_2^* y_j^{*(2)}, \bullet\right\rangle
    C_{\mu}G_2^*y_j^{*(2)}\\
    +
    \sum_{j=1}^{p_2}\sum_{i=1}^{p_1}& \left\langle C_{\mu}G_2^* y_j^{*(2)}, \bullet\right\rangle
    \left\langle C_{\mu}G_1^* y_i^{*(1)}, G_2^*y_j^{*(2)}\right\rangle
    C_{\mu}G^*y_i^{*(1)}\\
    -
    \sum_{j=1}^{p_2}\sum_{i=1}^{p_1}\sum_{k=1}^{p_1}&
    \left\langle C_{\mu}G_1^*y_i^{*(1)}, G_2^* y_j^{*(2)}\right\rangle
    \left\langle C_{\mu}G_1^* y_i^{*(1)}, \bullet\right\rangle
    \left\langle C_{\mu} G_1^*y_i^{*(1)}, G_2^*y_j^{*(2)}\right\rangle
    C_{\mu}G_1^*y_i^{*(1)},
\end{align*}
}
and the equality is independent of the choice of the representing sequences. 

As for the mean element, we have:
     \begin{align*}
         m_{\mu} 
         + 
         \sum_{i=1}^{\datdim} 
         \left\langle y - Gm_{\mu}, y_i^{*(12)} \right\rangle C_{\mu} G^* y_i^{*(12)}
         =
         m_{\mu} 
         + 
         \sum_{i=1}^{\datdim_1} 
         \left\langle y_1 - G_1 m_{\mu}, y_i^{*(1)} \right\rangle C_{\mu} G_1^* y_i^{*(1)}&
         \\
         +
         \sum_{j=1}^{\datdim_2}
         \left\langle
         y_2, y_j^{*(2)}\right\rangle
         C_{\mu}G_2^* y_j^{*(2)}
         -
         \sum_{j=1}^{\datdim_2}
         \sum_{i=1}^{\datdim_1}
         \left\langle 
             y_2, y_j^{*(2)}
         \right\rangle 
         \left\langle
                 C_{\mu}G_1^* y_i^{*(1)}, G_2^* y_j^{*(2)}
         \right\rangle 
                 C_{\mu}G_1^* y_i^{*(1)}\\
         - 
         \sum_{j=1}^{\datdim_2} 
         \left\langle G_2m_{\mu}, y_j^{*(2)} \right\rangle 
         C_{\mu} G_2^* y_j^{*(2)}\\
         -
         \sum_{j=1}^{\datdim_2} 
         \sum_{i=1}^{\datdim_1} 
         \left\langle G_2 C_{\mu}G_1^*y_i^{*(1)}, y_j^{*(2)} \right\rangle 
         \left\langle y_1 - G_1 m_{\mu}, y_i^{*(1)}\right\rangle
         C_{\mu} G_2^* y_j^{*(2)}\\
         + 
         \sum_{j=1}^{\datdim_2} 
         \sum_{k=1}^{\datdim_1}
         \left\langle G_2m_{\mu}, y_j^{*(2)} \right\rangle 
         \left\langle
             C_{\mu}G_1^*y_k^{*(1)}, G_2^* y_j^{*(2)}
             \right\rangle
             C_{\mu}G_1^*y_k^{*(1)}
             \\
         +
         \sum_{i=1}^{\datdim_2} 
         \sum_{i=j}^{\datdim_1} 
         \sum_{k=1}^{\datdim_1}
         \left\langle G_2 C_{\mu}G_1^*y_i^{*(1)}, y_j^{*(2)} \right\rangle 
         \left\langle y_1 - G_1 m_{\mu}, y_i^{*(1)}\right\rangle
         \left\langle
             C_{\mu}G_1^*y_k^{*(1)}, G_2^* y_j^{*(2)}
             \right\rangle
             C_{\mu}G_1^*y_k^{*(1)}.
     \end{align*}
 \end{corollary}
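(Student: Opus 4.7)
The plan is to combine \Cref{th:transitivity_disintegration} with two applications of the explicit disintegration formula in \Cref{th:tarieladze_disintegration}. Since \Cref{th:transitivity_disintegration} identifies $\mu_{|(G_1,G_2)=(y_1,y_2)}$ with $(\mu_{|G_1=y_1})_{|G_2=y_2}$ as Gaussian measures, their mean elements and covariance operators must agree, so it suffices to expand the right-hand iterated object and then check that the left-hand expression (obtained from a single disintegration against the bundled operator $G$ with representing sequence $(y_i^{*(12)})$) is another valid realization of the same operator and affine map. The independence from the choice of representing sequence is then automatic, since \Cref{th:tarieladze_disintegration} guarantees that the Gaussian measure produced is the same for any choice.

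First, I would apply \Cref{th:tarieladze_disintegration} to $\mu$ with respect to $G_1$ using the $G_1 C_\mu G_1^*$-representing sequence $(y_i^{*(1)})$, obtaining
\[
m_\mu^{(1)}(y_1) = m_\mu + \sum_{i=1}^{\datdim_1}\langle y_1 - G_1 m_\mu, y_i^{*(1)}\rangle\, C_\mu G_1^* y_i^{*(1)},\qquad
C_\mu^{(1)} = C_\mu - \sum_{i=1}^{\datdim_1}\langle C_\mu G_1^* y_i^{*(1)},\bullet\rangle\, C_\mu G_1^* y_i^{*(1)}.
\]
Then I would apply the same theorem again to $\mu_{|G_1=y_1}$ with respect to $G_2$, this time using the $G_2 C_\mu^{(1)} G_2^*$-representing sequence $(y_j^{*(2)})$, which is exactly the content of \Cref{th:ultimate_update}. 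This already gives the compact form stated in that corollary; the present corollary is simply its fully expanded version.

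Next, I would substitute the explicit expressions for $m_\mu^{(1)}(y_1)$ and $C_\mu^{(1)}$ into the formulas for $m_\mu^{(1\oplus 2)}$ and $C_\mu^{(1\oplus 2)}$. For the covariance, one expands $C_\mu^{(1)} G_2^* y_j^{*(2)} = C_\mu G_2^* y_j^{*(2)} - \sum_i \langle C_\mu G_1^* y_i^{*(1)}, G_2^* y_j^{*(2)}\rangle\, C_\mu G_1^* y_i^{*(1)}$ and then distributes the outer bilinear pairing $\langle \bullet, \bullet\rangle$, producing the four-term sum displayed in the statement. For the mean, one expands $G_2 m_\mu^{(1)}(y_1) = G_2 m_\mu + \sum_i \langle y_1 - G_1 m_\mu, y_i^{*(1)}\rangle\, G_2 C_\mu G_1^* y_i^{*(1)}$ and again distributes the action of $\langle \cdot, y_j^{*(2)}\rangle$ together with the outer factor $C_\mu^{(1)} G_2^* y_j^{*(2)}$ (itself expanded as above), which yields precisely the seven-term decomposition in the statement.

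The routine but bookkeeping-heavy step is the double distribution that turns the compact iterated formulas into the expanded form; this is essentially a combinatorial exercise of multiplying out the $(C_\mu - \sum_i \cdots)$ factor against the $(C_\mu^{(1)} G_2^* y_j^{*(2)})$ expansion and carefully tracking which representing sequence each index runs over. The main conceptual obstacle, rather than this algebra, is the assertion that the equality holds for $\mu\circ(G_1,G_2)^{-1}$-almost every $(y_1,y_2)$: this follows because the iterated disintegration is defined only $\nu_1$-almost everywhere in $y_1$ and then $\nu_2^{y_1}$-almost everywhere in $y_2$, and \Cref{th:transitivity_disintegration} guarantees that on the resulting full-measure set both sides coincide as Gaussian measures, hence as pairs (mean element, covariance operator). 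Finally, the independence of the displayed equality from the choice of representing sequences $(y_i^{*(12)})$, $(y_i^{*(1)})$, $(y_j^{*(2)})$ is inherited from \Cref{th:tarieladze_disintegration}, since each side, viewed abstractly, reconstructs the same Gaussian measure guaranteed by \Cref{th:transitivity_disintegration}.
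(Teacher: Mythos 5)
Your proposal is correct and takes essentially the same route as the paper, which presents this corollary as a direct consequence of \Cref{th:transitivity_disintegration}: you equate the moments of the bundled and iterated disintegrating measures via that theorem, then expand the two-stage formulae of \Cref{th:tarieladze_disintegration} (i.e., the compact form of \Cref{th:ultimate_update}) by substituting the stage-one mean and covariance into the stage-two expressions and distributing the pairings. The only caveat is that the printed display contains index typos (some $G_1^*$ subscripts appear as $G^*$, one cross term is missing, and the triple sums reuse the index $i$), so a faithful execution of your expansion yields the corrected form of those terms rather than the display verbatim.
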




\bibliography{bibliography}

\end{document}